\documentclass[a4paper, 11pt]{article}

\usepackage[utf8]{inputenc}
\usepackage{graphicx}
\usepackage{amssymb}
\usepackage{amsmath}
\usepackage{mathtools}
\usepackage[usenames,dvipsnames,svgnames,table]{xcolor}
\usepackage{enumerate}
\usepackage{enumitem}
\setlist[enumerate, 1]{label=(\roman*)}
\setlist[itemize]{leftmargin=1.5em}
\setlist[description]{leftmargin=1em}
\setlist[itemize, 1]{label=$\blacktriangleright$}
\setlist[itemize, 2]{label=$\bullet$}
\usepackage[normalem]{ulem} 

\usepackage{ifthen}

\usepackage{graphicx}
\usepackage{float}
\graphicspath{ {pics/} }

\usepackage[hidelinks]{hyperref}  

\usepackage{amsthm}

\theoremstyle{plain}
\newtheorem{theorem}{Theorem}[section]

\newtheorem{proposition}[theorem]{Proposition}

\theoremstyle{definition}
\newtheorem{definition}{Definition}[section]

\theoremstyle{remark}
\newtheorem{remark}{Remark}[section]


\newcommand{\Z}{\mathbb{Z}}

\newcommand{\R}{\mathbb{R}}

\newcommand{\setbar}{\ | \ }
\newcommand{\set}[2]{\left\{#1 \setbar #2 \right\}}

\newcommand{\scalarprod}[2]{\langle #1, #2 \rangle}

\newcommand{\multiratio}[8]{\frac{|#1 #2|}{|#2 #3|} \cdot \frac{|#3 #4|}{|#4 #5|} \cdot \frac{|#5 #6|}{|#6 #7|} \cdot \frac{|#7 #8|}{|#8 #1|}}

\renewcommand{\refeq}[1]{(\ref{#1})}


\newcommand{\bela}[1]{\begin{equation}\label{#1}}
\newcommand{\ela}{\end{equation}}
\newcommand{\bear}[1]{\begin{array}{#1}}
\newcommand{\ear}{\end{array}}
\newcommand{\bn}{\mbox{\boldmath $n$}}
\newcommand{\bu}{\mbox{\boldmath $u$}}
\newcommand{\be}{\mbox{\boldmath $e$}}
\newcommand{\bef}{\mbox{\boldmath $f$}}
\newcommand{\n}{\mbox{\boldmath $n$}}

\newcommand{\q}{\mbox{\boldmath $x$}}
\newcommand{\sqr}[1]{{(#1)}_{1/2}}
\newcommand{\as}{\\[.6em]}

\newcommand{\AAS}{\\[1.8em]}
\newcommand{\dis}{\displaystyle}
\newcommand{\hn}[1]{\mbox{\boldmath $n$}^{\mbox{\tiny$#1$}}}
\newcommand{\bsigma}{\boldsymbol{\sigma}}

\newcommand{\ignore}[1]{}

\setlength\marginparwidth{3cm}


\title{On a discretization of confocal quadrics.\linebreak I.\ An integrable systems approach}
\author{Alexander I. Bobenko$^1$, Wolfgang K. Schief$^2$,\\ Yuri B. Suris$^1$, Jan Techter$^1$ \bigskip\\  
$^1$Institut f\"ur Mathematik, TU Berlin, \\ Str.\@ des 17.\@ Juni 136, 10623 Berlin, Germany\bigskip\\
$^2$School of Mathematics and Statistics and\\ Australian Research Council Centre of Excellence for\\ Mathematics and Statistics of Complex Systems,\\ The University of New South Wales, Sydney, NSW 2052, Australia}
\date{\today}

\begin{document}

\maketitle

\begin{abstract}
Confocal quadrics lie at the heart of the system of confocal coordinates (also called elliptic coordinates, after Jacobi).
We suggest a discretization which respects two crucial properties of confocal coordinates: separability and all two-dimensional coordinate subnets being isothermic surfaces (that is, allowing a conformal parametrization along curvature lines, or, equivalently, supporting orthogonal Koenigs nets). Our construction is based on an integrable discretization of the Euler-Poisson-Darboux equation and leads to discrete nets with the separability property, with all two-dimensional subnets being Koenigs nets, and with an additional novel discrete analog of the orthogonality property. The coordinate functions of our discrete nets are given explicitly in terms of gamma functions.
\end{abstract}

\newpage

\tableofcontents

\newpage

\begin{figure}[H]
  \centering
  \includegraphics[clip, trim={380 0 380 0}, width=0.49\textwidth]{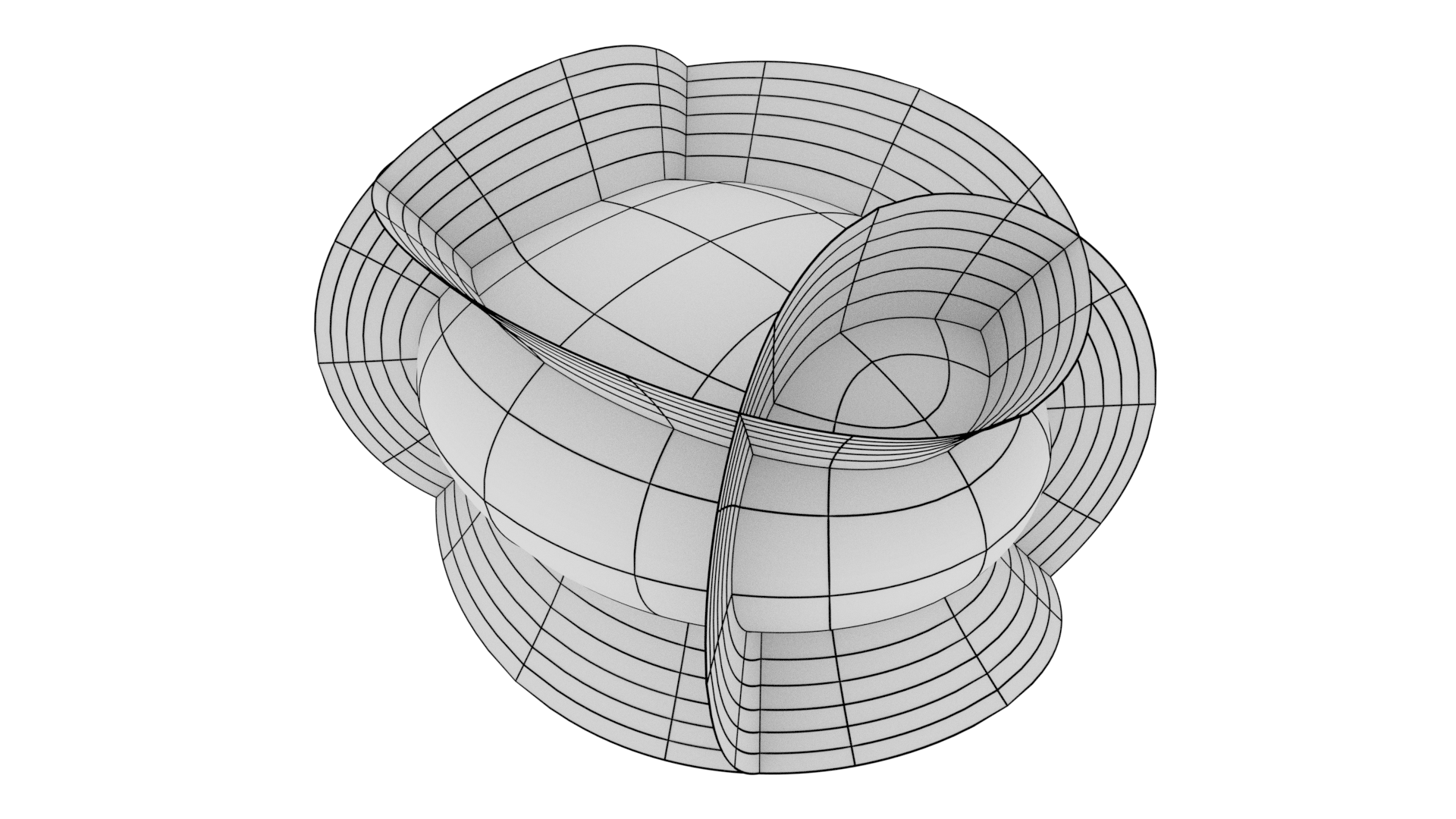}
  \includegraphics[clip, trim={380 0 380 0}, width=0.49\textwidth]{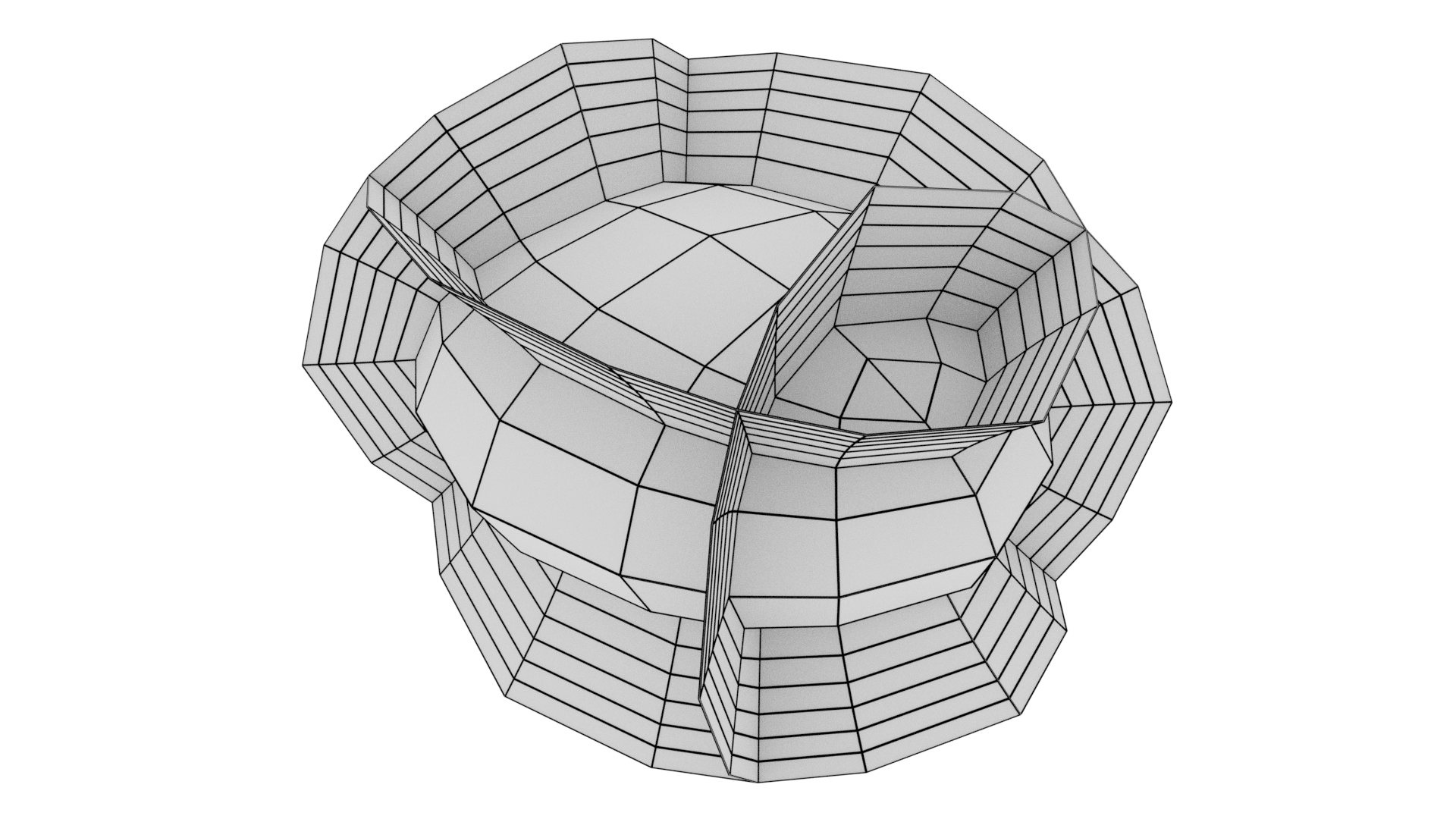}
  \caption{
    Left: three quadrics of different signature from a family of confocal quadrics in $\R^3$.
    Right: the corresponding three discrete quadrics from a family introduced in the present paper.
  }
  \label{fig:confocal3d}
\end{figure}

\section{Introduction}

Confocal quadrics in $\R^N$ belong to the favorite objects of classical mathematics, due to their beautiful geometric properties and numerous relations and applications to various branches of mathematics. To mention just a few well-known examples:
\begin{itemize}
\item Optical properties of quadrics and their confocal families were discovered by the ancient Greeks and continued to fascinate mathematicians for many centuries, culminating in the famous Ivory and Chasles theorems from 19th century given a modern interpretation by Arnold \cite{Arnold}.
\item Dynamical systems: integrability of geodesic flows on quadrics (discovered by Jacobi) and of billiards in quadrics was given a far reaching generalization, with applications to the spectral theory, by Moser \cite{Moser}.
\item Gravitational properties of ellipsoids were studied in detail starting with Newton and Ivory, see \cite[Appendix 15]{Arnold}, \cite[Part 8]{Fuchs-Tabachnikov}, and are based to a large extent on the geometric properties of confocal quadrics.
\item Quadrics in general and confocal systems of quadrics in particular serve as favorite objects in differential geometry. They deliver a non-trivial example of isothermic surfaces which form one of the most interesting classes of ``integrable'' surfaces, that is, surfaces described by integrable differential equations and possessing a rich theory of transformations with remarkable permutability properties.
\item Confocal quadrics lie at the heart of the system of confocal coordinates which allows for separation of variables in the Laplace operator. As such, they support a rich theory of special functions including Lam\'e functions and their generalizations \cite{Bateman-Erdelyi}.  
\end{itemize}

In the present paper, we are interested in a discretization of a system of confocal quadrics, or, what is the same, of a system of confocal coordinates in $\R^n$. According to the philosophy of structure preserving discretization \cite{bobenko-suris}, it is crucial not to follow the path of a straightforward discretization of differential equations, but rather to discretize a well chosen collection of essential geometric properties. 
In the case of confocal quadrics, the choice of properties to be preserved in the course of discretization becomes especially difficult, due to the above-mentioned abundance of complementary geometric and analytic features. 

A number of attempts to discretize quadrics in general and confocal systems of quadrics in particular are available in the literature. In \cite{tsukerman} a discretization of the defining property of a conic as an image of a circle under a projective transformation is considered. Since a natural discretization of a circle is a regular polygon, one ends up with a class of discrete curves which are projective images of regular polygons. 
More sophisticated geometric constructions are developed in \cite{Akopyan-Bobenko} and lead to a very interesting class of quadrilateral nets in a plane and in space, with all quadrilaterals possessing an incircle, resp. all hexahedra possessing an inscribed sphere.  The rich geometric content of these constructions still waits for an adequate analytic description. 

Our approach here is based on a discretization of the classical Euler-Poisson-Darboux equation which has been introduced in \cite{konopelchenko-schief} in the context of discretization of semi-Hamiltonian systems of hydrodynamic type. The discrete Euler-Poisson-Darboux equation is integrable in the sense of multi-dimensional consistency \cite{bobenko-suris}, which, in turn, gives rise to Darboux-type transformations with remarkable permutability properties. 
As we will demonstrate, the integrable nature of the discrete Euler-Poisson-Darboux equation is reflected in the preservation of a suite of algebraic and geometric properties of the confocal coordinate systems.

Our proposal takes as a departure point two properties of the confocal coordinates: they are separable, and all two-dimensional coordinate subnets are isothermic surfaces (which is equivalent to being conjugate nets with equal Laplace invariants and with orthogonal coordinate curves). We propose here a novel concept of discrete isothermic nets. Remarkably, the incircular nets of \cite{Akopyan-Bobenko} turn out to be another instance of this geometry, see Appendix \ref{sect: incircular}. Discretization of confocal coordinate systems based on more general curvature line parametrizations will be addressed in \cite{BSST-II}.

\medskip
{\bf Acknowledgements.} This research is supported by the DFG Collaborative Research Center TRR 109 ``Discretization in Geometry and Dynamics''. We would like to acknowledge the stimulating role of the research visit to TU Berlin by I. Taimanov in summer 2014. In our construction, we combine the
fact that the confocal coordinate system satisfies continuous Euler-Poisson-Darboux equations, which we learned from I. Taimanov, with the  discretization of Euler-Poisson-Darboux equations recently proposed (in a different context) by one of the authors \cite{konopelchenko-schief}.

The pictures in this paper were generated using {\tt blender}, {\tt matplotlib}, {\tt geogebra}, and {\tt inkscape}.

\section{Euler-Poisson-Darboux equation}

\begin{definition}
  Let $U \subset \R^M$ be open and connected.
  We say that a net
  \begin{equation*}
    \q : U \rightarrow \R^N, \quad (u_1, \ldots, u_M) \mapsto (x_1, \ldots, x_N)
  \end{equation*}
  satisfies the \emph{Euler-Poisson-Darboux system}
  if all its two-dimensional subnets satisfy the (vector) Euler-Poisson-Darboux equation with the same parameter $\gamma$:
  \begin{equation}
    \frac{\partial^2 \q}{\partial u_i \partial u_j}
    = \frac{\gamma}{u_i - u_j} \left( \frac{\partial \q}{\partial u_j} - \frac{\partial \q}{\partial u_i} \right)
    \tag{EPD$_\gamma$}
    \label{eq:Euler-Darboux}
  \end{equation}
  for all $i,j \in\{ 1, \ldots, M\}$, $i \neq j$.
\end{definition}

For any $s$ distinct indices $i_1, \ldots, i_s \in \{ 1, \ldots, M\}$, we write
\begin{equation*}
  U_{i_1 \ldots i_s} \coloneqq \set{ (u_{i_1}, \ldots, u_{i_s}) \in \R^s }{ (u_1, \ldots, u_M) \in U }.
\end{equation*}

\begin{definition}
  A two-dimensional subnet of a net $\q : \R^M \supset U \rightarrow \R^N$ corresponding to the coordinate directions
  $i,j\in\{ 1, \ldots, M\}$, $i \neq j$, is called \emph{a Koenigs net}, or, classically, \emph{a conjugate net with equal Laplace invariants},
  if there exists a function
 $
    \nu: U_{ij} \rightarrow \R_+
 $
  such that
  \begin{equation}
    \frac{\partial^2 \q}{\partial u_i \partial u_j}
    = \frac{1}{\nu} \frac{\partial \nu}{\partial u_j} \frac{\partial \q}{\partial u_i}
    + \frac{1}{\nu} \frac{\partial \nu}{\partial u_i} \frac{\partial \q}{\partial u_j}.
    \label{eq:Koenigs-net}
  \end{equation}
  \end{definition}

\begin{proposition}
  \label{prop:Koenigs}
  Let $\q : \R^M \supset U \rightarrow \R^N$ be a net satisfying the Euler-Poisson-Darboux system {\rm \refeq{eq:Euler-Darboux}}.
  Then all two-dimensional subnets of $\q$ are Koenigs nets.
\end{proposition}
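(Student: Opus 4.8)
The plan is to recognize that the Koenigs condition \refeq{eq:Koenigs-net} has exactly the structural form of the Euler--Poisson--Darboux equation \refeq{eq:Euler-Darboux}: both express the mixed derivative $\partial^2\q/\partial u_i\partial u_j$ as a linear combination of the first derivatives $\partial\q/\partial u_i$ and $\partial\q/\partial u_j$ with \emph{scalar} coefficients. So the whole problem reduces to matching coefficients, i.e.\ to exhibiting a positive function $\nu$ on $U_{ij}$ whose logarithmic derivatives reproduce the EPD coefficients. Since such a $\nu$ is the only ingredient required by the definition, the statement follows once it is produced.

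Concretely, comparing \refeq{eq:Euler-Darboux} with \refeq{eq:Koenigs-net} term by term (the coefficient of $\partial\q/\partial u_i$ and of $\partial\q/\partial u_j$), I would require
\[
  \frac{\partial \log\nu}{\partial u_i} = \frac{\gamma}{u_i-u_j}, \qquad
  \frac{\partial \log\nu}{\partial u_j} = -\frac{\gamma}{u_i-u_j}.
\]
First I would check that these two equations are compatible: both mixed second partials of $\log\nu$ equal $\gamma/(u_i-u_j)^2$, so the associated $1$-form is closed and a potential $\log\nu$ exists. In fact one sees immediately that the right-hand side is $\gamma\,d\log|u_i-u_j|$, because $\partial_i(u_i-u_j)=1$ and $\partial_j(u_i-u_j)=-1$. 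Hence the system integrates explicitly, and I would simply take
\[
  \nu = |u_i-u_j|^\gamma .
\]

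It then remains to verify that this $\nu$ does the job and meets the hypotheses of the definition. Substituting $\nu=|u_i-u_j|^\gamma$ into the right-hand side of \refeq{eq:Koenigs-net} reproduces $\frac{\gamma}{u_i-u_j}\left(\partial\q/\partial u_j-\partial\q/\partial u_i\right)$, which is exactly the right-hand side of \refeq{eq:Euler-Darboux}; so the Koenigs equation holds on every two-dimensional subnet with this choice of $\nu$. For positivity and smoothness I would note that \refeq{eq:Euler-Darboux} only makes sense where $u_i\neq u_j$, so the domain avoids the diagonal; since $U$, and hence $U_{ij}$, is connected, $u_i-u_j$ has constant sign there, and therefore $\nu=|u_i-u_j|^\gamma$ is a well-defined, smooth, strictly positive function, as required by $\nu\colon U_{ij}\to\R_+$.

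I do not anticipate a genuine obstacle here: the only real content is the observation that the EPD coefficients are already a logarithmic gradient, which makes $\nu$ completely explicit rather than merely existent. The single point deserving a word of care is the positivity and constant-sign claim, which is precisely where the hypotheses that $U$ is connected and that the net is restricted to $u_i\neq u_j$ enter.
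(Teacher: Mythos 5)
Your proof is correct and takes essentially the same route as the paper's: both exhibit the explicit function $\nu=|u_i-u_j|^\gamma$ and check that its logarithmic derivatives reproduce the coefficients of \refeq{eq:Euler-Darboux}, so that the equation is already in Koenigs form \refeq{eq:Koenigs-net}. Your additional remarks on integrability of the logarithmic gradient and on positivity of $\nu$ away from the diagonal are sound but only make explicit what the paper leaves implicit.
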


\begin{proof}
  The function
   $
    \nu(u_i, u_j) = |u_i - u_j|^\gamma
 $
  solves 
   \begin{equation}
    \label{eq:Koenigs-condition}
      \frac{1}{\nu} \frac{\partial \nu}{\partial u_i} = \frac{\gamma}{u_i - u_j},\qquad
      \frac{1}{\nu} \frac{\partial \nu}{\partial u_j} = \frac{\gamma}{u_j - u_i},
  \end{equation}
 thus the Euler-Poisson-Darboux system \refeq{eq:Euler-Darboux} is of the Koenigs form  \refeq{eq:Koenigs-net}.
\end{proof}

\begin{remark}
  Eisenhart  classified conjugate nets in $\R^3$ with all two-dimensional coordinate surfaces being Koenigs nets \cite{eisenhart}.
  The generic case is described by solutions of the Euler-Poisson-Darboux system \refeq{eq:Euler-Darboux} with an arbitrary coefficient $\gamma$.
\end{remark}

\section{Confocal coordinates}

For given $a_1 > a_2 > \cdots > a_N > 0$,
we consider the one-parameter family of confocal quadrics in $\R^N$ given by
\begin{equation}   \label{eq:confocal-family}
 Q_\lambda=\left\{ \q=(x_1,\ldots,x_N)\in\mathbb R^N:\ \sum_{k=1}^N \frac{x_k^2}{a_k + \lambda} = 1\right\}, \quad \lambda \in \R.
\end{equation}
Note that the quadrics of this family are centered at the origin and have the principal axes aligned along the coordinate directions.
For a given point $\q=(x_1, \ldots, x_N) \in \R^N$ with $x_1x_2\ldots x_N\neq 0$, equation $\sum_{k=1}^N x_k^2/(a_k + \lambda) = 1$ is, after clearing the denominators, a polynomial equation of degree $N$ in $\lambda$, with $N$ real roots $u_1,\ldots, u_N$ lying in the intervals
\begin{equation*}
  -a_1 < u_1 < -a_2 < u_2 < \cdots < -a_N < u_N,
\end{equation*}
so that
\begin{equation} \label{eq: equation factorized}
  \sum_{k=1}^N \frac{x_k^2}{\lambda+a_k}  - 1= - \frac{\prod_{m=1}^N (\lambda - u_m)}{\prod_{m=1}^N (\lambda+a_m) }.
\end{equation}
These $N$ roots correspond to the $N$ confocal quadrics of the family \eqref{eq:confocal-family} that intersect at the point $\q=(x_1, \ldots, x_N)$:
\begin{equation}   \label{eq:confocal-quadrics}
  \sum_{k=1}^N \frac{x_k^2}{a_k + u_i} = 1, \quad i=1,\ldots,N \quad\Leftrightarrow\quad \q\in \bigcap_{i=1}^N Q_{u_i}.
\end{equation}
Each of the quadrics $Q_{u_i}$ is of a different signature. Evaluating the residue of the right-hand side of \eqref{eq: equation factorized} at $\lambda=-a_k$, one can easily express $x_k^2$ through $u_1, \ldots, u_N$:
\begin{equation} \label{eq:elliptic-coordinates-squares}
  x_k^2 = \frac{\prod_{i=1}^N (u_i+a_k)}{\prod_{i \neq k} (a_k - a_i)}, \quad k = 1, \ldots, N.
 \end{equation}
Thus, for each point $(x_1, \ldots, x_N) \in \R^N$ with $x_1x_2\ldots x_N\neq 0$, there is exactly one solution $(u_1, \ldots, u_N) \in \mathcal{U}$ of \refeq{eq:elliptic-coordinates-squares},
where
\begin{equation}
  \mathcal{U} = \set{ (u_1,\ldots,u_N) \in \R^N }{ -a_1 < u_1 < -a_2 < u_2 < \ldots < -a_N < u_N }.
  \label{eq:domain}
\end{equation}
On the other hand, for each $(u_1, \ldots, u_N) \in \mathcal{U}$ there are exactly $2^N$ solutions $(x_1, \ldots, x_N) \in \R^N$,
which are mirror symmetric with respect to the coordinate hyperplanes. In what follows, when we refer to a solution of \refeq{eq:elliptic-coordinates-squares}, we always mean the solution with values in
\begin{equation*}
  \R_+^N = \set{ (x_1, \ldots, x_N) \in \R^N }{ x_1 > 0, \ldots, x_N > 0 }.
\end{equation*}
Thus, we are dealing with a parametrization of the first hyperoctant of $\R^N$,
$\q : \mathcal{U}\ni (u_1, \ldots, u_N) \mapsto (x_1,\ldots,x_N)\in\R_+^N$, given by
\begin{equation} \label{eq:elliptic-coordinates}
  x_k = \frac{\prod_{i=1}^{k-1} \sqrt{-(u_i+a_k)}\prod_{i=k}^N\sqrt{u_i+a_k}}{\prod_{i = 1}^{k-1} \sqrt{a_i - a_k}\prod_{i=k+1}^N\sqrt{a_k-a_i}}, 
  \quad k = 1, \ldots, N,
 \end{equation}
such that the coordinate hyperplanes $u_i = {\rm const}$ are mapped to the respective quadrics given by \refeq{eq:confocal-quadrics}.
The coordinates $(u_1,\ldots,u_N)$ are called \emph{confocal coordinates} (or {\em elliptic coordinates}, following Jacobi \cite[Vorlesung 26]{jacobi}).

\subsection{Confocal coordinates and isothermic surfaces}

\begin{proposition} \label{prop:confocal-Euler-Darboux}
  The net $\q : \mathcal{U} \rightarrow \R_+^N$ given by \refeq{eq:elliptic-coordinates} satisfies the Euler-Poisson-Darboux system {\rm \refeq{eq:Euler-Darboux}} with $\gamma = \frac{1}{2}$.  As a consequence, all two-dimensional subnets of $\q$ are Koenigs nets.
\end{proposition}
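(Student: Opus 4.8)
The plan is to exploit the fact that the coordinate functions in \refeq{eq:elliptic-coordinates} are \emph{separable}: each $x_k$ is, up to a multiplicative constant independent of $(u_1,\ldots,u_N)$, a product of single-variable factors,
\begin{equation*}
  x_k = C_k \prod_{i=1}^N \sqrt{\abs{u_i + a_k}}, \qquad C_k = \frac{1}{\prod_{i=1}^{k-1}\sqrt{a_i - a_k}\;\prod_{i=k+1}^N \sqrt{a_k - a_i}}.
\end{equation*}
The absolute values merely repackage the signs already present in \refeq{eq:elliptic-coordinates}: on $\mathcal{U}$ one has $u_i + a_k < 0$ for $i < k$ and $u_i + a_k > 0$ for $i \geq k$. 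I would first record this product form, since it is the structural feature that drives the entire computation.

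Because $x_k$ factors as a product of functions each depending on a single variable, I would pass to logarithmic derivatives. For any index $i$,
\begin{equation*}
  \frac{1}{x_k}\frac{\partial x_k}{\partial u_i} = \frac{\partial}{\partial u_i}\log\sqrt{\abs{u_i + a_k}} = \frac{1}{2(u_i + a_k)},
\end{equation*}
the signs cancelling because only $\log\abs{u_i + a_k}$ enters. Since distinct factors involve distinct variables, the mixed second derivative separates multiplicatively,
\begin{equation*}
  \frac{1}{x_k}\frac{\partial^2 x_k}{\partial u_i \partial u_j} = \frac{1}{2(u_i + a_k)}\cdot\frac{1}{2(u_j + a_k)}.
\end{equation*}
This reduces the vector equation \refeq{eq:Euler-Darboux} to a scalar identity for each coordinate $x_k$ and each pair $i \neq j$.

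It then remains to substitute these expressions into \refeq{eq:Euler-Darboux} with $\gamma = \tfrac12$ and verify the resulting algebraic identity. After dividing through by $x_k$, the claim becomes
\begin{equation*}
  \frac{1}{(u_i + a_k)(u_j + a_k)} = \frac{1}{u_i - u_j}\left(\frac{1}{u_j + a_k} - \frac{1}{u_i + a_k}\right),
\end{equation*}
which is the elementary partial-fraction identity obtained by combining the two fractions on the right over a common denominator. Confirming it shows that $\q$ satisfies \refeq{eq:Euler-Darboux} with $\gamma = \tfrac12$, and the assertion about Koenigs nets is then immediate from Proposition \ref{prop:Koenigs}.

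I do not anticipate a genuine obstacle here: the only delicate point is the bookkeeping of signs and of the constant prefactor $C_k$, and both are disposed of at once by observing that only the single-variable logarithm $\log\abs{u_i + a_k}$ survives differentiation, so that $C_k$ and all sign choices drop out. The essential content of the proof is simply the recognition that confocal coordinates are separable in this product sense.
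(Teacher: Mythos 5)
Your proposal is correct and is essentially the paper's own proof: the paper likewise computes $\partial x_k/\partial u_i = \tfrac{1}{2}x_k/(a_k+u_i)$, obtains $\partial^2 x_k/\partial u_i\partial u_j = x_k/\bigl(4(a_k+u_i)(a_k+u_j)\bigr)$, and concludes via the same partial-fraction identity and Proposition \ref{prop:Koenigs}. Your explicit framing through logarithmic derivatives of the separable product form is just a slightly more structured presentation of the identical computation.
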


\begin{proof}
 The partial derivatives of \refeq{eq:elliptic-coordinates} satisfy
  \begin{equation}   \label{eq:first-derivatives}
    \frac{\partial x_k}{\partial u_i} = \frac{1}{2} \frac{x_k}{(a_k + u_i)}.
  \end{equation}
  From this we compute the second order partial derivatives for $i \neq j$:
  \begin{align}
    \label{eq:second-derivatives}
      \frac{\partial^2 x_k}{\partial u_i \partial u_j}
      &= \frac{1}{2(a_k + u_i)} \frac{\partial x_k}{\partial u_j}
      \ = \ \frac{x_k}{4(a_k + u_i)(a_k + u_j)} \nonumber \\
      &= \frac{x_k}{4(u_i-u_j)} \left( \frac{1}{a_k+u_j} - \frac{1}{a_k+u_i} \right) \nonumber \\
      &= \frac{1}{2(u_i - u_j)} \left( \frac{\partial x_k}{\partial u_j} - \frac{\partial x_k}{\partial u_i} \right). \qedhere
    \end{align}
 \end{proof}

\begin{proposition}
  \label{prop:orthogonal}
  The net $\q : \mathcal{U} \rightarrow \R_+^N$ given by \refeq{eq:elliptic-coordinates} is orthogonal, 
  and thus gives a curvature line parametrization of any of its two-dimensional coordinate surfaces.
\end{proposition}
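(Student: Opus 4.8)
The plan is to verify orthogonality by a direct computation in coordinates and then read off the curvature-line statement from the conjugacy that is already available. Orthogonality means $\scalarprod{\partial \q/\partial u_i}{\partial \q/\partial u_j} = 0$ for $i \neq j$, that is,
\[
  \sum_{k=1}^N \frac{\partial x_k}{\partial u_i}\,\frac{\partial x_k}{\partial u_j} = 0 .
\]
First I would substitute the first-derivative formula \refeq{eq:first-derivatives}, namely $\partial x_k/\partial u_i = \tfrac{1}{2}\,x_k/(a_k+u_i)$, which collapses the claim to the purely algebraic identity
\[
  \sum_{k=1}^N \frac{x_k^2}{(a_k+u_i)(a_k+u_j)} = 0, \qquad i \neq j .
\]

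The key step is a partial-fraction decomposition in the summation index $k$: since the two linear factors differ by $u_i-u_j$, one has
\[
  \frac{1}{(a_k+u_i)(a_k+u_j)} = \frac{1}{u_j-u_i}\left(\frac{1}{a_k+u_i} - \frac{1}{a_k+u_j}\right).
\]
Multiplying by $x_k^2$ and summing over $k$ turns the target sum into $\tfrac{1}{u_j-u_i}\left(\sum_k x_k^2/(a_k+u_i) - \sum_k x_k^2/(a_k+u_j)\right)$. At this point I would invoke the defining equations of the confocal quadrics \refeq{eq:confocal-quadrics}: because $\q$ lies on $Q_{u_i}$ for every $i$, each inner sum $\sum_k x_k^2/(a_k+u_i)$ equals $1$. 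The two brackets are therefore both $1$ and cancel, which proves orthogonality. (Alternatively, inserting the explicit expression \refeq{eq:elliptic-coordinates-squares} for $x_k^2$ rewrites the sum as $\sum_k \prod_{m\neq i,j}(u_m+a_k)\big/\prod_{l\neq k}(a_k-a_l)$, a divided difference of a polynomial of degree $N-2$ in $a_k$, which vanishes by Lagrange interpolation; but the partial-fraction route is shorter and exposes the geometric reason for the cancellation.)

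For the curvature-line assertion I would fix all coordinates except $u_i,u_j$. The resulting two-dimensional coordinate surface carries a conjugate parametrization: by Proposition~\ref{prop:confocal-Euler-Darboux} it is a Koenigs net, and in a Koenigs net the mixed derivative $\partial^2\q/\partial u_i\partial u_j$ is a tangential combination of $\partial\q/\partial u_i$ and $\partial\q/\partial u_j$, so it has no normal component. Combined with the orthogonality just established, the parametrization is simultaneously conjugate and orthogonal, which is precisely the characterization of a curvature-line parametrization. This settles the second claim.

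I expect no genuine obstacle here; the one thing to watch is to apply the partial-fraction reduction \emph{before} expanding $x_k^2$ through the explicit product \refeq{eq:elliptic-coordinates-squares}. Keeping the sums in the closed form $\sum_k x_k^2/(a_k+u_i)=1$ is exactly what makes the whole computation collapse, whereas a head-on expansion of the products would obscure the cancellation.
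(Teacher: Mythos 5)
Your proof is correct and follows essentially the same route as the paper: both reduce the scalar product via \eqref{eq:first-derivatives} to $\frac{1}{4}\sum_k x_k^2/((a_k+u_i)(a_k+u_j))$, apply the same partial-fraction splitting in $u_i-u_j$, and conclude from the quadric equations \eqref{eq:confocal-quadrics} that the two resulting sums are both equal to $1$ and cancel. Your added remark on why conjugacy plus orthogonality yields a curvature-line parametrization is a correct elaboration of what the paper leaves implicit.
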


\begin{proof}
 With the help of \eqref{eq:first-derivatives}, we compute, for $i \neq j$, the scalar product
  \begin{align*}
    \left\langle \frac{\partial \q}{\partial u_i}, \frac{\partial \q}{\partial u_j} \right\rangle
    &= \frac{1}{4} \sum_{k=1}^{N} \frac{x_k^2}{(a_k + u_i)(a_k + u_j)}\\
    &= \frac{1}{4(u_i - u_j)} \sum_{k=1}^{N} \left( \frac{x_k^2}{a_k + u_j} - \frac{x_k^2}{a_k + u_i} \right) = 0,
  \end{align*}
  since $\q(u_1, \ldots, u_N)$ satisfies \refeq{eq:confocal-quadrics} for $u_i$ and for $u_j$.
\end{proof}

We recall the following classical definition.

\begin{definition} \label{dfn:is}
A curvature line parametrized surface $\q: U_{ij}\to\mathbb R^N$ is called an {\em isothermic surface} if its first fundamental form is conformal, possibly upon a reparametrization of independent variables $u_i\mapsto\varphi_i(u_i)$, $u_j\mapsto\varphi_j(u_j)$, that is, if 
$$
\frac{|\partial \q /\partial u_i |^2}{|\partial \q/\partial u_ j |^2}=\frac{\alpha_i(u_i)}{\alpha_j(u_j)}
$$ 
at every point $(u_i,u_j)\in U_{ij}$.
\end{definition}
In other words, isothermic surfaces are characterized by the relations $\partial^2 \q/\partial u_i\partial u_j\!\in{\rm span}(\partial \q/\partial u_i,\partial \q/\partial u_j)$ and
\begin{equation}\label{eq:is prop}
\left\langle\frac{\partial \q}{\partial u_i},\frac{\partial \q}{\partial u_j}\right\rangle=0,\quad 
\left|\frac{\partial \q}{\partial u_i}\right|^2=\alpha_i(u_i) s^2,\quad 
\left|\frac{\partial \q}{\partial u_j}\right|^2=\alpha_j(u_j) s^2,
\end{equation}
with a conformal metric coefficient $s:U_{ij}\to\mathbb R_+$ and with the functions $\alpha_i$, $\alpha_j$, each depending on the respective variable $u_i$, $u_j$ only. These conditions may be equivalently represented as
\begin{equation}\label{eq:is prop1}
\frac{\partial^2 \q}{\partial u_i\partial u_j}=
\frac{1}{s}\frac{\partial s}{\partial u_j} \frac{\partial \q}{\partial u_i}+
\frac{1}{s}\frac{\partial s}{\partial u_i} \frac{\partial \q}{\partial u_j}, \quad
\left\langle\frac{\partial \q}{\partial u_i}, \frac{\partial \q}{\partial u_j}\right\rangle=0.
\end{equation}
Comparison with \eqref{eq:Koenigs-net} shows that isothermic surfaces are nothing but orthogonal Koenigs nets.

Thus, Propositions \ref{prop:confocal-Euler-Darboux}, \ref{prop:orthogonal} immediately imply the first statement of the following proposition.

\begin{proposition} \label{prop: isothermic}
All two-dimensional coordinate surfaces $\q:\mathcal U_{ij}\to\mathbb R^N$ (for fixed values of $u_m$, $m\neq i,j$) of a confocal coordinate system are isothermic. Specifically, one has \eqref{eq:is prop} with 
\begin{equation}  \label{eq: s}
  s=s(u_i,u_j)=|u_i-u_j |^{1/2},
\end{equation}
\begin{equation} \label{eq: alphas}
\frac{\alpha_i(u_i)}{\alpha_j(u_j)} = -\frac{\prod_{m\neq i,j}(u_i - u_m)}{\prod_{m=1}^N (u_i + a_m)}\cdot
 \frac{\prod_{m=1}^N (u_j + a_m)}{\prod_{m\neq i,j}(u_j - u_m)}.
\end{equation}
\end{proposition}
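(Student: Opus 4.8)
The plan is to observe first that the \emph{qualitative} half of the statement needs no new work: Proposition~\ref{prop:confocal-Euler-Darboux} shows that each two-dimensional coordinate surface is a Koenigs net, Proposition~\ref{prop:orthogonal} shows that it is orthogonal, and, as noted just above, isothermic surfaces are exactly orthogonal Koenigs nets. What remains is to produce the explicit conformal data, namely the metric coefficient $s$ in \eqref{eq: s} and the ratio $\alpha_i/\alpha_j$ in \eqref{eq: alphas}. Both will fall out of a single closed-form evaluation of the diagonal metric coefficients $|\partial \q/\partial u_i|^2$.

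So I would first compute, using the first derivatives \eqref{eq:first-derivatives} together with the formula \eqref{eq:elliptic-coordinates-squares} for $x_k^2$,
\begin{equation*}
  \left|\frac{\partial \q}{\partial u_i}\right|^2
  = \frac14 \sum_{k=1}^N \frac{x_k^2}{(a_k+u_i)^2}
  = \frac14 \sum_{k=1}^N \frac{\prod_{l\neq i}(u_l+a_k)}{(a_k+u_i)\prod_{m\neq k}(a_k-a_m)},
\end{equation*}
where one factor $(a_k+u_i)$ has cancelled against the factor $(u_i+a_k)$ present in $\prod_l(u_l+a_k)$.

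The key step --- and the only non-routine one --- is to evaluate this sum in closed form. I would recognize each summand as a residue: introducing the rational function
\begin{equation*}
  R(z) = \frac{\prod_{l\neq i}(u_l+z)}{(z+u_i)\prod_{m=1}^N(z-a_m)},
\end{equation*}
the residue of $R$ at the simple pole $z=a_k$ is exactly the $k$-th summand. Since the numerator has degree $N-1$ and the denominator degree $N+1$, we have $R(z)=O(z^{-2})$ as $z\to\infty$, so the sum of the residues of $R$ over all its finite poles vanishes. The only pole not among the $a_k$ is $z=-u_i$, whence
\begin{equation*}
  \sum_{k=1}^N \operatorname{Res}_{z=a_k} R
  = -\operatorname{Res}_{z=-u_i} R
  = (-1)^{N+1}\,\frac{\prod_{l\neq i}(u_l-u_i)}{\prod_{m=1}^N(u_i+a_m)},
\end{equation*}
and therefore $|\partial \q/\partial u_i|^2 = \tfrac14(-1)^{N+1}\prod_{l\neq i}(u_l-u_i)\big/\prod_{m}(u_i+a_m)$.

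Finally I would read off the assertion. Restricting to the two-dimensional subnet (all $u_m$ with $m\neq i,j$ held fixed) and splitting off the factor $l=j$, the closed form above becomes $(u_j-u_i)$ times a function of $u_i$ alone; the analogous expression for $|\partial \q/\partial u_j|^2$ is $(u_i-u_j)$ times a function of $u_j$ alone. Hence both diagonal coefficients carry the common factor $|u_i-u_j|$, which identifies $s^2=|u_i-u_j|$, i.e.\ \eqref{eq: s}, and exhibits $\alpha_i,\alpha_j$ as the remaining single-variable factors, confirming \eqref{eq:is prop}. Forming the quotient $\alpha_i(u_i)/\alpha_j(u_j)=|\partial \q/\partial u_i|^2/|\partial \q/\partial u_j|^2$, the prefactors $\tfrac14(-1)^{N+1}$ cancel, the sign $(u_j-u_i)/(u_i-u_j)=-1$ produces the overall minus sign, and reindexing the two $(N-2)$-fold products via $\prod_{l\neq i,j}(u_l-u_i)=(-1)^{N-2}\prod_{m\neq i,j}(u_i-u_m)$ (and likewise for $j$) cancels the residual signs, yielding precisely \eqref{eq: alphas}. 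As a consistency check one verifies positivity of $|\partial \q/\partial u_i|^2$ by a sign count based on the interlacing $-a_1<u_1<-a_2<\cdots$, and one may cross-check the value of $s$ against the Koenigs function $\nu=|u_i-u_j|^{1/2}$ (that is, $\gamma=\tfrac12$) of Proposition~\ref{prop:confocal-Euler-Darboux}. The residue evaluation is the crux; everything else is bookkeeping of signs and factors.
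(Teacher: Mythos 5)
Your proof is correct, and its overall architecture coincides with the paper's: dispose of the qualitative claim by combining Propositions~\ref{prop:confocal-Euler-Darboux} and~\ref{prop:orthogonal} with the characterization of isothermic surfaces as orthogonal Koenigs nets, then reduce everything to the single closed-form identity $\bigl|\partial\q/\partial u_i\bigr|^2=\tfrac14\prod_{m\neq i}(u_i-u_m)\big/\prod_{m=1}^N(u_i+a_m)$, which is exactly the paper's \eqref{eq: id squares}, and read off \eqref{eq: s} and \eqref{eq: alphas} by splitting off the factor $u_i-u_j$. Where you differ is in how that identity is derived. The paper differentiates the generating identity \eqref{eq: equation factorized} with respect to $u_i$ (using \eqref{eq:first-derivatives}) and then sets $\lambda=u_i$ -- a two-line manipulation that never touches the explicit formula \eqref{eq:elliptic-coordinates-squares}. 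You instead substitute \eqref{eq:elliptic-coordinates-squares} and evaluate the resulting sum by the vanishing of the total residue of a rational function that is $O(z^{-2})$ at infinity; your sign bookkeeping, including $(-1)^{N+1}\prod_{l\neq i}(u_l-u_i)=\prod_{m\neq i}(u_i-u_m)$, checks out, and the simplicity of the pole at $z=-u_i$ is guaranteed on $\mathcal U$ by the interlacing of the $u$'s and $-a_m$'s. The two derivations are really two faces of the same partial-fraction structure -- \eqref{eq: equation factorized} is itself the residue decomposition from which \eqref{eq:elliptic-coordinates-squares} was extracted -- so neither buys extra generality; the paper's is shorter because it reuses an identity already on the table, while yours is more self-contained at the cost of an extra sign computation.
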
 
\begin{proof}
Differentiate both sides of \eqref{eq: equation factorized} with respect to $u_i$. Taking into account \eqref{eq:first-derivatives}, we find:
\begin{equation*}
  \sum_{k=1}^N \frac{x_k^2}{(u_i+a_k)(\lambda+a_k)}
  =  \frac{\prod_{m\neq i} (\lambda - u_m)}{\prod_{m=1}^N (\lambda+a_m) }.
\end{equation*}
Setting $\lambda=u_i$, we finally arrive at 
\begin{equation} \label{eq: id squares}
  \left | \frac{\partial \q}{\partial u_i} \right |^2 =\sum_{k=1}^N\left(\frac{\partial x_k}{\partial u_i}\right)^2
  =\frac{1}{4}\sum_{k=1}^N \frac{x_k^2}{(u_i+a_k)^2}
  =  \frac{1}{4}\frac{\prod_{m\neq i} (u_i - u_m)}{\prod_{m=1}^N (u_i+a_m) }.
\end{equation}
This proves \eqref{eq:is prop} with \eqref{eq: s}, \eqref{eq: alphas}.
%
%
\end{proof}

\begin{remark}  
 
  Darboux classified orthogonal nets in $\R^3$ whose two-dimensional coordinate surfaces are isothermic \cite[Livre II, Chap. III--V]{darboux} . He found several families, all satisfying the Euler-Poisson-Darboux system with coefficient $\gamma = \pm \frac{1}{2}, -1$, or $-2$.  The family corresponding to $\gamma=\frac{1}{2}$ consists of confocal cyclides and includes the case of confocal quadrics (or their M\"obius images).
\end{remark}

\subsection{Confocal coordinates and separability}

From \refeq{eq:elliptic-coordinates} we observe that confocal coordinates are described by very special (separable) solutions of Euler-Poisson-Darboux equations \refeq{eq:Euler-Darboux}. We will now show that the separability property is almost characteristic for confocal coordinates.

\begin{proposition}
  \label{prop:Euler-Darboux-separable-solutions}
  A separable function $x : \R^N \supset U \rightarrow \R$, 
  \begin{equation}
    \label{eq:separable-solution}
    x(u_1, \ldots, u_N) = \prod_{i=1}^N \rho_i(u_i) 
  \end{equation}
  is a solution of the Euler-Poisson-Darboux system {\rm \refeq{eq:Euler-Darboux}} iff the functions $\rho_i : U_i \rightarrow \R$, $i = 1, \ldots, N$, satisfy
  \begin{equation}
    \label{eq:separable-solution-dgl}
    \frac{\rho_i^\prime(u_i)}{\rho_i(u_i)} = \frac{\gamma}{c + u_i}
  \end{equation}
  for some $c \in \R$ and for all $u_i \in U_i$.
\end{proposition}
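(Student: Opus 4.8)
The plan is to reduce the vector Euler-Poisson-Darboux system, restricted to the single separable scalar $x$, to a functional equation for the logarithmic derivatives $f_i \coloneqq \rho_i'/\rho_i$, and then to solve that functional equation by separation of variables.

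First I would differentiate $x = \prod_{i=1}^N \rho_i(u_i)$. On the open locus where $x \neq 0$ one has $\partial x/\partial u_i = f_i(u_i)\,x$ and, for $i \neq j$, $\partial^2 x/\partial u_i \partial u_j = f_i(u_i)\,f_j(u_j)\,x$. Substituting these into \refeq{eq:Euler-Darboux} and cancelling the common factor $x$ turns the system, for every pair $i \neq j$, into the equivalent pointwise identity
\begin{equation*}
f_i(u_i)\,f_j(u_j) = \frac{\gamma}{u_i - u_j}\bigl(f_j(u_j) - f_i(u_i)\bigr).
\end{equation*}
Where $\rho_i$ happens to vanish, this identity extends by continuity, so it is enough to argue on $\{x \neq 0\}$. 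The \emph{if} direction is then immediate: inserting $f_i(u_i) = \gamma/(c+u_i)$ makes both sides equal to $\gamma^2/\bigl((c+u_i)(c+u_j)\bigr)$ by the same partial-fraction manipulation already carried out in \refeq{eq:second-derivatives}.

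For the \emph{only if} direction I would take $\gamma \neq 0$ and first establish a dichotomy for the $f_i$. If some $f_{i_0}$ vanished at a single point, evaluating the displayed identity there against any $j \neq i_0$ would force $f_j \equiv 0$, and running this over all pairs would force every $f_i \equiv 0$, i.e.\ $x$ constant; this degenerate branch I set aside. Assuming therefore that all $f_i$ are nowhere zero, I divide the identity by $f_i f_j$ and rearrange to
\begin{equation*}
u_i - \frac{\gamma}{f_i(u_i)} = u_j - \frac{\gamma}{f_j(u_j)}.
\end{equation*}
The left side depends only on $u_i$ and the right only on $u_j$, while $u_i$ and $u_j$ range independently, so each side equals one constant $c$; since the function $u_i - \gamma/f_i$ is shared between every pair containing $i$, this constant is the same for all indices. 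Solving yields $f_i(u_i) = \gamma/(u_i - c)$ for all $i$, which is \refeq{eq:separable-solution-dgl} after the harmless renaming $c \mapsto -c$.

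The main obstacle is not any single computation but the bookkeeping around degeneracies: justifying the cancellation of $x$ (handled by restricting to $\{x\neq 0\}$ and extending by continuity), confirming that the separation constant is genuinely index-independent, and isolating the constant-solution and $\gamma = 0$ cases, where \refeq{eq:separable-solution-dgl} collapses to $f_i \equiv 0$.
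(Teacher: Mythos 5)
Your proposal is correct and follows essentially the same route as the paper: substitute the separable ansatz, cancel $x$, and separate variables to conclude that $u_i - \gamma/f_i$ is a constant $c$ independent of the index. The extra bookkeeping you supply around the degenerate branches (vanishing $x$, vanishing $f_i$, and $\gamma=0$) is sound and slightly more careful than the paper's proof, but it does not change the argument.
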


\begin{proof}
  Computing the derivatives of \refeq{eq:separable-solution} for $i = 1,\ldots,N,$ we obtain:
  \begin{equation*}
    \frac{\partial x}{\partial u_i} = x\cdot \frac{\rho_i^{\prime}(u_i)}{\rho_i(u_i)},
  \end{equation*}
  and for the second derivatives ($i \neq j$):
  \begin{equation}
    \label{eq:separable-second-derivatives1}
    \frac{\partial^2 x}{\partial u_i \partial u_j} 
    = x\cdot \frac{\rho_i^\prime(u_i)}{\rho_i(u_i)} \frac{\rho_j^\prime(u_j)}{\rho_j(u_j)}.
  \end{equation}
  On the other hand, $x$ satisfies the Euler-Poisson-Darboux system \refeq{eq:Euler-Darboux}, which implies
  \begin{equation}
    \label{eq:separable-second-derivatives2}
    \frac{\partial^2 x}{\partial u_i \partial u_j} 
    = \frac{\gamma}{u_i - u_j} \left( \frac{\rho_j^{\prime}(u_j)}{\rho_j(u_j)} - \frac{\rho_i^{\prime}(u_i)}{\rho_i(u_i)} \right) x.
  \end{equation}
  From \refeq{eq:separable-second-derivatives1} and \refeq{eq:separable-second-derivatives2} we obtain
  \[
    u_i - u_j = \gamma \left( \frac{\rho_i(u_i)}{\rho_i^{\prime}(u_i)} - \frac{\rho_j(u_j)}{\rho_j^{\prime}(u_j)} \right),
  \]
  or
  \[  
    \gamma \frac{\rho_i(u_i)}{\rho_i^{\prime}(u_i)} - u_i = \gamma \frac{\rho_j(u_j)}{\rho_j^{\prime}(u_j)} - u_j
  \]
  for all $i, j = 1, ..., N$, $i \neq j$, and $(u_i, u_j) \in U_{ij}$. Thus, both the left-hand side and the right-hand side of the last equation do not depend on $u_i,u_j$.
  So, there exists a $c \in \R$ such that \refeq{eq:separable-solution-dgl} is satisfied.
\end{proof}

For $\gamma = \frac{1}{2}$ general solutions of \refeq{eq:separable-solution-dgl} are given, up to constant factors, by
\begin{alignat*}{2}
  \rho_i(u_i, c) &= \sqrt{u_i +c} &&\quad \text{for} \quad u_i > - c,
  \intertext{respectively by }
  \rho_i(u_i,c) &=\sqrt{-(u_i + c)} &&\quad \text{for} \quad u_i < - c.
\end{alignat*}
A separable solution of the Euler-Poisson-Darboux system \refeq{eq:Euler-Darboux} with $\gamma = \frac{1}{2}$ finally takes the form
\begin{equation*}
  x(u_1, \ldots, u_N) = D \prod_{i=1}^N \rho_i(u_i,c) 
\end{equation*}
with some $c \in \R$, and with a constant $D \in \R$, which is the product of all the constant factors of $\rho_i(u_i,c)$ mentioned above. 

\begin{proposition}
  \label{prop:Euler-Darboux-confocal-quadrics}
  Let $a_1 > \cdots > a_N$ and set 
  \begin{equation*}
    \mathcal{U} = \set{ (u_1,\ldots,u_N) \in \R^N }{ -a_1 < u_1 < -a_2 < u_2 < \cdots < -a_N < u_N }. 
  \end{equation*}
  \begin{itemize}
\item[{\rm a)}]  Let $x_k : \mathcal{U} \rightarrow \R_+$, $k = 1, \ldots, N$, be $N$ independent separable solutions of the Euler-Poisson-Darboux system {\rm \refeq{eq:Euler-Darboux}} with $\gamma = \frac{1}{2}$ defined on $\mathcal U$ and satisfying there the following boundary conditions:
  \begin{align}
    \label{eq:boundary-conditions1}
    \lim_{u_k \searrow \ (- a_k)}x_k(u_1, \ldots, u_N) = 0 \quad &\text{for} \quad k = 1, \ldots, N,\\
    \label{eq:boundary-conditions2}
    \lim_{u_{k-1} \nearrow \ (- a_k)}x_k(u_1, \ldots, u_N) = 0 \quad &\text{for} \quad k = 2, \ldots, N.
  \end{align}
  Then 
  \begin{equation}\label{eq:Euler-Darboux-separable-solutions}
    x_k(u_1, \ldots, u_N) = D_k \prod_{i=1}^N \rho_i(u_i,a_k),\quad k = 1, \ldots, N,
  \end{equation}
  with some $D_1, \ldots, D_N > 0$ and with
  \begin{equation}\label{eq:rhos}
    \rho_i(u_i,a_k)=\left\{\begin{array}{ll}
                             \sqrt{u_i+a_k} \quad & \text{for} \quad i\ge k, \\ \\
                             \sqrt{-(u_i+a_k)} \quad & \text{for} \quad i<k.
                           \end{array}\right.    
  \end{equation}
  Thus, the net $\q =(x_1, \ldots, x_N):\mathcal U\to\R_+^N$ coincides with the confocal coordinates \eqref{eq:elliptic-coordinates} on the positive hyperoctant, 
  up to independent scaling along the coordinate axes $(x_1, \ldots, x_N) \mapsto (C_1 x_1, \ldots, C_N x_N)$ with some $C_1, \ldots, C_N >0$.
  
\item[{\rm b)}] The choice of the constants $D_1,\ldots,D_N > 0$ that specifies the system of confocal coordinates \eqref{eq:elliptic-coordinates} among the separable solutions \eqref{eq:Euler-Darboux-separable-solutions}, namely
\begin{equation}
D_k^{-2}=\prod_{i<k}(a_i-a_k)\prod_{i>k}(a_k-a_i),
\end{equation}
is the unique scaling (up to a common factor) such that the parameter curves are pairwise orthogonal.
\end{itemize}  
\end{proposition}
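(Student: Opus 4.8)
The plan is to prove the two parts in turn, obtaining (a) from Proposition~\ref{prop:Euler-Darboux-separable-solutions} together with the geometry of the domain $\mathcal U$, and then using the resulting explicit form to settle the uniqueness claim in (b).

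For part (a), I would begin with Proposition~\ref{prop:Euler-Darboux-separable-solutions}, which tells us that each separable solution $x_k$ of \refeq{eq:Euler-Darboux} with $\gamma=\tfrac12$ equals, up to a constant $D_k$, a product $\prod_i \rho_i(u_i,c_k)$ with $\rho_i(u_i,c_k)=\sqrt{\abs{u_i+c_k}}$ and a single constant $c_k$. Since the logarithmic derivative $\rho_i'/\rho_i=\tfrac{1}{2(u_i+c_k)}$ is singular at $u_i=-c_k$, for $x_k$ to be a genuine solution on all of the connected domain $\mathcal U$ the point $-c_k$ may not lie interior to any coordinate interval; as these intervals tile $(-a_1,\infty)$ with interior endpoints $-a_2,\dots,-a_N$, this confines $c_k$ to $\{a_1,\dots,a_N\}$. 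The boundary condition \refeq{eq:boundary-conditions1} then selects $c_k=a_k$, because the only factor depending on $u_k$ is $\rho_k(u_k,c_k)$, and this tends to $0$ as $u_k\searrow -a_k$ exactly when $c_k=a_k$; condition \refeq{eq:boundary-conditions2} is automatically satisfied and serves as a consistency check. The two sign branches in \refeq{eq:rhos} are then forced by $\mathcal U$ (one has $u_i+a_k>0$ for $i\ge k$ and $u_i+a_k<0$ for $i<k$), positivity of $x_k$ gives $D_k>0$, and comparison with \refeq{eq:elliptic-coordinates} identifies the net with the confocal one up to an axiswise scaling.

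For part (b), I would first note that \refeq{eq:first-derivatives} persists for every such separable solution, since the sign convention in \refeq{eq:rhos} makes $\rho_i'/\rho_i=\tfrac{1}{2(u_i+a_k)}$ for all $i,k$; hence orthogonality is equivalent to $\sum_k \frac{x_k^2}{(a_k+u_i)(a_k+u_j)}=0$ for all $i\ne j$. Writing an arbitrary admissible net as $x_k=C_k x_k^{\circ}$, where $x_k^{\circ}$ is the confocal coordinate and $C_k>0$, and applying the partial fraction $\frac{1}{(a_k+u_i)(a_k+u_j)}=\frac{1}{u_i-u_j}\bigl(\frac{1}{a_k+u_j}-\frac{1}{a_k+u_i}\bigr)$, this condition becomes $\Phi(u_i)=\Phi(u_j)$ with $\Phi(\lambda):=\sum_k \frac{C_k^2 (x_k^{\circ})^2}{\lambda+a_k}$. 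Substituting \refeq{eq:elliptic-coordinates-squares} and using the identity $\prod_{m\ne i}(t+u_m)-\prod_{m\ne j}(t+u_m)=(u_j-u_i)\prod_{m\ne i,j}(t+u_m)$, the difference $\Phi(u_i)-\Phi(u_j)$ factors into $(u_j-u_i)$ times the value of the linear functional $\mathcal L(q):=\sum_k \frac{C_k^2\, q(a_k)}{\prod_{l\ne k}(a_k-a_l)}$ on the monic polynomial $q_{ij}(t)=\prod_{m\ne i,j}(t+u_m)$ of degree $N-2$.

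In this language the forward implication is immediate: when all $C_k$ coincide, $\mathcal L$ is the divided difference at $a_1,\dots,a_N$, which annihilates every polynomial of degree below $N-1$, reproving Proposition~\ref{prop:orthogonal}. The crux, and the step I expect to be the main obstacle, is the converse. Here I would observe that as the $u_m$ with $m\ne i,j$ range over their open intervals, the coefficients of $q_{ij}$ sweep out an open set, so the vanishing of $\mathcal L(q_{ij})$ for all of them forces $\mathcal L$ to vanish on the whole space of polynomials of degree $\le N-2$. Testing against $1,t,\dots,t^{N-2}$ yields the homogeneous system $\sum_k w_k a_k^l=0$, $l=0,\dots,N-2$, with $w_k=C_k^2/\prod_{l'\ne k}(a_k-a_{l'})$; its coefficient matrix is a Vandermonde minor of rank $N-1$, so its null space is one-dimensional and is already spanned by the vector $\bigl(1/\prod_{l'\ne k}(a_k-a_{l'})\bigr)_k$. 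Consequently $C_k^2$ is independent of $k$, which means $D_k\propto D_k^{\circ}$ with $(D_k^{\circ})^{-2}=\prod_{i<k}(a_i-a_k)\prod_{i>k}(a_k-a_i)$, exactly the asserted uniqueness up to a common factor.
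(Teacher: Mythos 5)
Your proposal is correct and follows essentially the same route as the paper: part (a) via Proposition~\ref{prop:Euler-Darboux-separable-solutions} plus the boundary conditions pinning down $c_k=a_k$, and part (b) by reducing orthogonality to the vanishing of $\sum_k w_k a_k^m$ for $m=0,\dots,N-2$ (your divided-difference functional $\mathcal L$ is the paper's expansion in elementary symmetric polynomials in different clothing) and identifying the one-dimensional null space of the Vandermonde minor. Your treatment of (a) is in fact slightly more careful than the paper's, since you justify why $-c_k$ cannot lie in the interior of a coordinate interval before invoking the boundary conditions.
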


\begin{proof}
a) We have 
\begin{equation*}
    x_k(u_1, \ldots, u_N) = D_k\cdot \rho_1(u_1,c_k) \cdot \ldots \cdot \rho_N(u_N,c_k),\quad k = 1, \ldots, N.
  \end{equation*}
Boundary conditions \refeq{eq:boundary-conditions1}, \eqref{eq:boundary-conditions2} yield that the constants are given by $c_k = a_k$, and that 
the solutions are actually given by \eqref{eq:Euler-Darboux-separable-solutions}. Formulas \refeq{eq:elliptic-coordinates} are now equivalent to a specific choice of the constants $D_k$.

b)  From \eqref{eq:first-derivatives} we compute:   
 \begin{equation}\label{eq: cont scalar product}
    \left\langle \frac{\partial\q}{\partial u_i}, \frac{\partial\q}{\partial u_j}\right\rangle
    = \frac{1}{4}\sum_{k=1}^N \frac{x_k^2}{(u_i+a_k)(u_j+a_k)} 
    = \frac{1}{4} \sum_{k=1}^N (-1)^{k-1} D_k^2 \prod_{l\neq i,j} (u_l + a_k).
\end{equation}
  We have:
  \[
  \prod_{l\neq i,j} (u_l + a_k)  = \sum_{m=0}^{N-2} p^{(N-2-m)}_{ij}(\bu)a_k^m,
  \]
where $p_{ij}^{(N-2-m)}(\bu)$ is an elementary symmetric polynomial of degree $N-2-m$ in $u_l$, $l\neq i,j$. Thus,
  \begin{align*}
    \left\langle \frac{\partial\q}{\partial u_i},\frac{\partial\q}{\partial u_j} \right\rangle
    &= \frac{1}{4}\sum_{m=0}^{N-2} \left( \sum_{k=1}^N (-1)^{k-1} a_k^m D_k^2 \right) p^{(N-2-m)}_{ij}(\bu).
  \end{align*}
Since the polynomials $p^{(N-2-m)}_{ij}(\bu)$ are independent on $\mathcal{U}$, the latter expression is equal to zero if and only if
  \[
  \sum_{k=1}^N (-1)^{k-1} a_k^m D_k^2 = 0, \qquad m= 0, \ldots, N-2.
  \]
This system of $N-1$ linear homogeneous equations for the $N$ unknowns $D_k^2$ does not depend on $i,j$. Supplying it by the non-homogeneous equation $\sum_{k=1}^N (-1)^{k-1} a_k ^{N-1} D_k^2 = 1$, we find the unique solution of the resulting system as quotients of Vandermonde determinants, or finally $(-1)^{k-1}D_k^2=1/\prod_{i\neq k} (a_k-a_i)$.
 \end{proof}

\begin{remark}
  The boundary conditions ensure that the $2N-1$ faces of the boundary of $\mathcal{U}$ are mapped into coordinate hyperplanes.
  Their images are degenerate quadrics of the confocal family  \refeq{eq:confocal-family}.
\end{remark}

\section{Discrete Koenigs nets}
\label{sect: dKoenigs}

For a function $f$ on $\Z^M$ we define the \emph{difference operator} in the standard way:
\begin{equation*}
  \Delta_i f(\bn) = f(\bn + \be_i) - f(\bn)
\end{equation*}
for all $\bn \in \Z^M$,
where $\be_i$ is the $i$-th coordinate vector of $\Z^M$.

\begin{definition}
  A two-dimensional discrete net $\q :  \Z^M \supset U_{ij} \rightarrow \R^N$ corresponding to the coordinate directions $i,j \in \{1, \ldots, M\}$, 
  $i \neq j$, is called \emph{a discrete Koenigs net} if there exists a function
  $
    \nu : U_{ij} \rightarrow \R_+
  $
  such that
  \begin{equation}
    \label{eq:discrete-Koenigs}
    \Delta_i \Delta_j \q = \frac{\nu_{(j)} \nu_{(ij)} - \nu \nu_{(i)}}{\nu(\nu_{(i)} + \nu_{(j)})} \Delta_i \q + 
                                     \frac{\nu_{(i)} \nu_{(ij)} - \nu \nu_{(j)}}{\nu(\nu_{(i)} + \nu_{(j)})} \Delta_j \q.
  \end{equation}
Here we use index notation to denote shifts of $\nu$:
$$
    \nu_{(i)}(\bn) \coloneqq \nu(\bn + \be_i),\qquad
    \nu_{(ij)}(\bn) \coloneqq \nu(\bn + \be_i + \be_j), \qquad \bn \in \Z^M.
$$
\end{definition}

The geometric meaning of this algebraic definition is as follows. Like in the continuous case, discrete K\"onigs nets constitute a subclass of discrete conjugate nets (Q-nets), in the sense that all two-dimensional subnets have planar faces. See \cite{bobenko-suris} for more information on Q-nets, as well as on geometric properties of discrete Koenigs nets. 
Consider an elementary planar quadrilateral $(\q,\q_{(i)},\q_{(ij)},\q_{(j)})$ of a Q-net governed by the discrete Darboux equation 
  \begin{equation}
    \label{eq:discrete-darboux}
    \Delta_i \Delta_j \q = A \Delta_i \q + B \Delta_j \q.
  \end{equation}
Let $M$ be the intersection point of its diagonals $[\q,\q_{(ij)}]$ and $[\q_{(i)},\q_{(j)}]$. Then one can easily compute that $M$ divides the corresponding diagonals in the following relations:
  \[
 \overrightarrow{ \q_{(i)}M }: \overrightarrow{M\q_{(j)}} = (B+1) :(A+1), \quad \overrightarrow{\q M}: \overrightarrow{M\q_{(ij)}} = 1:(A+B+1).
  \]
A Q-net is called a Koenigs net, if there is a positive function $\nu$ defined at the vertices of the net such that
\[
\overrightarrow{ \q_{(i)}M} : \overrightarrow{ M\q_{(j)}}=\nu_{(i)} : \nu_{(j)},\quad 
\overrightarrow{  \q M}: \overrightarrow{ M\q_{(ij)}}=\nu:\nu_{(ij)}.
\]
One can show \cite{bobenko-suris} that this happens if and only if the intersection points of the diagonals of four adjacent quadrilaterals are coplanar.
The function $\nu$ should satisfy
\bela{Z2}
  (A+1)\nu_{(i)} = (B+1)\nu_{(j)},\quad \nu_{(ij)} = (A+B+1)\nu.
\ela
This is clearly equivalent to
\bela{Z1}
  A= \frac{\nu_{(j)} \nu_{(ij)} - \nu \nu_{(i)}}{\nu(\nu_{(i)} + \nu_{(j)})},\quad 
  B= \frac{\nu_{(i)} \nu_{(ij)} - \nu \nu_{(j)}}{\nu(\nu_{(i)} + \nu_{(j)})}.
\ela
The pair of linear equations \eqref{Z2} is compatible if and only if the following nonlinear equation is satisfied for the coefficients $A,B$ associated with four adjacent quadrilaterals:
\bela{Z3}
  \frac{A_{(ij)}+1}{B_{(ij)}+1} = \frac{(A_{(j)}+B_{(j)}+1)}{(A_{(i)}+B_{(i)}+1)}\frac{(A+1)}{(B+1)}.
\ela
If this relation for the coefficients $A,B$ of the discrete Darboux equation \eqref{eq:discrete-darboux} holds true everywhere, then the linear equations (\ref{Z2}) determine a function $\nu$ uniquely, as soon as initial data are prescribed, consisting, for instance, of the values of $\nu$ at two neighboring vertices. The associated discrete Darboux equation is then of Koenigs type (\ref{eq:discrete-Koenigs}).

\section{Discrete Euler-Poisson-Darboux equation}

\begin{definition}
  Let $U \subset \Z^M$.
  We say that a discrete net
  \begin{equation*}
    \q : U \rightarrow \R^N, \quad (n_1, \ldots, n_M) \mapsto (x_1, \ldots, x_N)
  \end{equation*}
  satisfies the \emph{discrete Euler-Darboux system} if all of its two-dimensional subnets satisfy the (vector) discrete Euler-Poisson-Darboux equation with the same parameter $\gamma$:
  \begin{equation}
    \label{eq:discrete-Euler-Darboux}
    \Delta_i \Delta_j \q = \frac{\gamma}{n_i + \epsilon_i - n_j - \epsilon_j} ( \Delta_j \q - \Delta_i \q )
    \tag{dEPD$_\gamma$}
  \end{equation}
  for all $i, j \in \{1, \ldots, M\}$, $i \neq j$, and some $\gamma \in \R$, $\epsilon_1, \ldots, \epsilon_M \in \R$.
\end{definition}

\begin{remark}
  This discretization of the Euler-Poisson-Darboux system was introduced by Konopelchenko and Schief  \cite{konopelchenko-schief}.
\end{remark}

\begin{proposition}
  Let $\q : \Z^M \supset U \rightarrow \R^N$ be a discrete net satisfying the discrete Euler-Poisson-Darboux system
 {\rm \refeq{eq:discrete-Euler-Darboux}}.
  Then all two-dimensional subnets of $\q$ are discrete Koenigs nets.
\end{proposition}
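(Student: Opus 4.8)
The plan is to recognize the discrete Euler--Poisson--Darboux equation \eqref{eq:discrete-Euler-Darboux} as a discrete Darboux equation \eqref{eq:discrete-darboux} and then to verify the Koenigs compatibility condition derived in Section~\ref{sect: dKoenigs}. Fix a pair of directions $i \neq j$ and restrict attention to the corresponding two-dimensional subnet, so that the same $\gamma$ and the relevant $\epsilon_i,\epsilon_j$ enter. Writing $t \coloneqq n_i + \epsilon_i - n_j - \epsilon_j$, equation \eqref{eq:discrete-Euler-Darboux} is precisely \eqref{eq:discrete-darboux} with
\[
  A = -\frac{\gamma}{t}, \qquad B = \frac{\gamma}{t}.
\]
In particular $\Delta_i \Delta_j \q \in \mathrm{span}(\Delta_i \q, \Delta_j \q)$, so every face is planar and the subnet is a Q-net, which is the prerequisite for speaking of a Koenigs net.

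The key structural observation is twofold. First, $A + B = 0$ identically, so $A + B + 1 = 1$ at every vertex; hence the factors $A_{(j)} + B_{(j)} + 1$ and $A_{(i)} + B_{(i)} + 1$ occurring in \eqref{Z3} both equal $1$. Second, $t$ is invariant under the simultaneous shift in the directions $i$ and $j$, since replacing $(n_i, n_j)$ by $(n_i + 1, n_j + 1)$ leaves $t$ unchanged; therefore $A_{(ij)} = A$ and $B_{(ij)} = B$. Substituting these two facts into the compatibility condition \eqref{Z3}, both sides collapse to $\frac{A+1}{B+1}$, so \eqref{Z3} holds automatically everywhere. By the discussion following \eqref{Z3}, the linear system \eqref{Z2} then determines a function $\nu$, and the subnet is of Koenigs type \eqref{eq:discrete-Koenigs}.

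It remains to exhibit such a $\nu$ and to check its positivity, which is the only delicate point. With $A+B+1=1$ the second equation in \eqref{Z2} reads $\nu_{(ij)} = \nu$, forcing $\nu$ to depend only on the combination $t$; writing $\nu = g(t)$ (so that $\nu_{(i)} = g(t+1)$ and $\nu_{(j)} = g(t-1)$), the first equation in \eqref{Z2} becomes the two-step recurrence $(t-\gamma)\,g(t+1) = (t+\gamma)\,g(t-1)$. I would solve it by gamma functions, setting
\[
  \nu = g(t) = \frac{\Gamma\bigl(\tfrac{t+1+\gamma}{2}\bigr)}{\Gamma\bigl(\tfrac{t+1-\gamma}{2}\bigr)},
\]
and verify the recurrence directly from $\Gamma(z+1) = z\,\Gamma(z)$, which reproduces $g(t+1)/g(t-1) = (t+\gamma)/(t-\gamma)$ exactly.

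The main obstacle is therefore not the compatibility, which is essentially free here because $A = -B$, but the positivity requirement $\nu : U_{ij} \to \R_+$ built into the definition of a discrete Koenigs net. The gamma-function formula is positive precisely when both arguments are positive, i.e. when $t > \gamma - 1$; on any domain $U$ for which the relevant values of $t = n_i + \epsilon_i - n_j - \epsilon_j$ stay in this range (the situation of interest for the confocal nets, where $\gamma = \tfrac{1}{2}$), positivity holds and the argument is complete. For unrestricted domains one would either pass to $|\nu|$ and track the sign changes across the poles of $\Gamma$, or restrict to the subdomain on which the diagonal intersection point $M$ lies inside the quadrilaterals, which is the geometric content encoded by the positivity of $\nu$.
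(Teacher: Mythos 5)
Your proof is correct and follows essentially the same route as the paper: the paper's proof consists precisely of the observation that the coefficients $A=-B$ of the discrete Darboux equation obey the Koenigs compatibility condition \eqref{Z3}, which you verify in detail using $A+B=0$ together with the invariance of $t=n_i+\epsilon_i-n_j-\epsilon_j$ under the simultaneous shift. Your explicit gamma-function formula for $\nu$ reproduces the paper's own computation of $\mu(m)$ given immediately after the proposition, and your attention to the positivity of $\nu$ addresses a point the paper leaves implicit.
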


\begin{proof}
 It is straightforward to verify that the coefficients 
\bela{Z4}
  A = -B = \frac{\gamma}{n_i + \epsilon_i - n_j - \epsilon_j}
\ela
indeed obey the Koenigs condition (\ref{Z3}).
\end{proof}

We now show that for a discrete net satisfying the discrete Euler-Poisson-Darboux equation \eqref{eq:discrete-Euler-Darboux}, the function $\nu$ can be found explicitly. For this aim, use the ansatz
  \begin{equation*}
    \nu(n_i, n_j) = \mu(n_i - n_j), 
  \end{equation*}
  so that $\nu_{(ij)} = \nu(n_i+1,n_j+1)=\nu(n_i,n_j)=\nu$.
  Under this ansatz, equation \refeq{eq:discrete-Koenigs} simplifies to
  \begin{equation*}
    \Delta_i \Delta_j \q = \frac{\nu_{(j)} - \nu_{(i)}}{\nu_{(i)} + \nu_{(j)}} \Delta_i \q + \frac{\nu_{(i)} - \nu_{(j)}}{\nu_{(i)} + \nu_{(j)}} \Delta_j \q.
  \end{equation*}
  Comparing with \refeq{eq:discrete-Euler-Darboux} we obtain
  \begin{align*}
    &\frac{\nu_{(i)} - \nu_{(j)}}{\nu_{(i)} + \nu_{(j)}} = \frac{\gamma}{n_i + \epsilon_i - n_j - \epsilon_j}\\
    \Leftrightarrow~ &\nu_{(i)} \left( 1 - \frac{\gamma}{n_i + \epsilon_i - n_j - \epsilon_j} \right)
    = \nu_{(j)} \left( 1 + \frac{\gamma}{n_i + \epsilon_i - n_j - \epsilon_j} \right)\\
    \Leftrightarrow~ &\nu(n_i+1,  n_j ) = \nu(n_i, n_j + 1) \ \frac{n_i + \epsilon_i - n_j - \epsilon_j + \gamma}{n_i + \epsilon_i - n_j - \epsilon_j - \gamma}.
      \end{align*}
Thus, the function $\mu$ should satisfy      
$$
    \mu(m + 1) = \mu(m - 1) \ \frac{m + \epsilon_i - \epsilon_j + \gamma}{m + \epsilon_i - \epsilon_j - \gamma}.
$$
This equation is easily solved:
  \begin{equation*}
    \mu(m) =\frac{\Gamma \left( \frac{1}{2} ( m + \epsilon_i - \epsilon_j + \gamma + 1 ) \right)}
    {\Gamma \left( \frac{1}{2} ( m + \epsilon_i - \epsilon_j - \gamma + 1 ) \right)}b(m),
  \end{equation*}
  where $\Gamma$ denotes the gamma function and $b$ is any function of period 2. It is recalled that $\Gamma(x+1)=x\Gamma(x)$.

\section{Discrete confocal quadrics}
\label{sect: discrete quadrics}

We have seen in the continuous case (Proposition \ref{prop:Euler-Darboux-confocal-quadrics}) that confocal quadrics are described,
up to a componentwise scaling,  by separable solutions of the Euler-Poisson-Darboux system \refeq{eq:Euler-Darboux} with $\gamma = \frac{1}{2}$.
It is therefore natural to consider separable solutions of the discrete Euler-Poisson-Darboux system.
\subsection{Separability}

\begin{proposition} {\rm \cite{konopelchenko-schief}}
  A separable function $x : \Z^N \supset U \rightarrow \R$,
  \begin{equation}
    \label{eq:discrete-separable-solution}
    x(n_1, \ldots, n_N) = \rho_1(n_1) \cdots \rho_N(n_N),
  \end{equation}
  is a solution of the discrete Euler-Poisson-Darboux system {\rm \refeq{eq:discrete-Euler-Darboux}} iff the functions $\rho_i: U_i \rightarrow \R$, $i = 1, \ldots, N$, satisfy
  \begin{equation}
    \label{eq:discrete-separable-solution-dgl-delta}
    \Delta\rho_i(n_i) = \frac{\gamma \rho_i(n_i)}{n_i + \epsilon_i + c} ,
  \end{equation}  
  or, equivalently,
  \begin{equation}
    \label{eq:discrete-separable-solution-dgl}
    \rho_i(n_i+1) = \rho_i(n_i) \frac{n_i + \epsilon_i + c + \gamma}{n_i + \epsilon_i + c}
  \end{equation}
  for some $c \in \R$ and for all $n_i \in U_i$.
\end{proposition}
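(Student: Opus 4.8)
The plan is to transcribe, step for step, the proof of the continuous counterpart (Proposition~\ref{prop:Euler-Darboux-separable-solutions}), replacing partial derivatives by difference operators and the logarithmic derivative $\rho_i'/\rho_i$ by the discrete logarithmic difference $\Delta\rho_i/\rho_i$. Concretely, I expect the separation-of-variables mechanism to survive discretization essentially verbatim, producing a single constant $c$ shared by all factors $\rho_i$.

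First I would compute the differences of the separable ansatz~\refeq{eq:discrete-separable-solution}. A shift in the $i$-th direction affects only the factor $\rho_i$, so that $\Delta_i x = x\,\Delta\rho_i(n_i)/\rho_i(n_i)$; and since for $i\neq j$ the operators $\Delta_i$ and $\Delta_j$ act on distinct factors, one obtains the product rule
\[
  \Delta_i\Delta_j x = \Delta\rho_i(n_i)\,\Delta\rho_j(n_j)\prod_{k\neq i,j}\rho_k(n_k).
\]
Likewise $\Delta_j x - \Delta_i x = \prod_{k\neq i,j}\rho_k(n_k)\cdot\big(\rho_i(n_i)\,\Delta\rho_j(n_j)-\rho_j(n_j)\,\Delta\rho_i(n_i)\big)$.

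Next I would substitute these into the discrete Euler-Poisson-Darboux equation~\refeq{eq:discrete-Euler-Darboux} and cancel the common factor $\prod_{k\neq i,j}\rho_k$. Writing $g_i\coloneqq\Delta\rho_i/\rho_i$, the system collapses, for every pair $i\neq j$, to the scalar identity $(n_i+\epsilon_i-n_j-\epsilon_j)\,g_i g_j=\gamma(g_j-g_i)$. Dividing by $g_i g_j$ and rearranging separates the variables,
\[
  \frac{\gamma}{g_i}-(n_i+\epsilon_i)=\frac{\gamma}{g_j}-(n_j+\epsilon_j),
\]
whose left side depends only on $n_i$ and right side only on $n_j$; since this holds for every pair of indices, transitivity forces a single common constant $c$. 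Solving $\gamma/g_i=n_i+\epsilon_i+c$ gives $\Delta\rho_i/\rho_i=\gamma/(n_i+\epsilon_i+c)$, which is precisely~\refeq{eq:discrete-separable-solution-dgl-delta}, and its one-step form is~\refeq{eq:discrete-separable-solution-dgl}. The converse direction is obtained by reading the same chain backwards: if each $\rho_i$ satisfies the difference equation with a common $c$, the scalar identity holds, and multiplying through by $\prod_{k\neq i,j}\rho_k$ reconstitutes~\refeq{eq:discrete-Euler-Darboux}.

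The one point requiring care, and the \textbf{main obstacle}, is the legitimacy of the cancellations: the division demands that the factors $\rho_k$ and the differences $\Delta\rho_i$ be nonvanishing on the relevant domain. The first is part of the genericity hypothesis ($x$, and hence each $\rho_i$, taking nonzero values). The second is automatic away from the trivial case: if $\Delta\rho_i$ vanished at some point while $\gamma\neq0$ and $\rho_i\neq0$, the scalar identity would read $0=\gamma\,g_j(n_j)$ for all $n_j$, forcing every $\rho_j$ to be constant and collapsing the solution to a degenerate one. Thus, excluding this degeneracy, all differences are nonzero and the separation argument goes through cleanly, establishing the stated equivalence.
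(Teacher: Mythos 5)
Your proposal is correct and follows essentially the same route as the paper: substitute the separable ansatz into \refeq{eq:discrete-Euler-Darboux}, cancel the common factor $\prod_{k\neq i,j}\rho_k$, and separate variables to obtain the constant $c$ from the identity $\frac{\gamma\rho_i(n_i)}{\rho_i(n_i+1)-\rho_i(n_i)}-(n_i+\epsilon_i)=\frac{\gamma\rho_j(n_j)}{\rho_j(n_j+1)-\rho_j(n_j)}-(n_j+\epsilon_j)$. Your explicit attention to the nonvanishing of $\rho_k$ and $\Delta\rho_i$ is a small extra precaution that the paper passes over silently, but it does not change the argument.
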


\begin{proof}
  Substituting \refeq{eq:discrete-separable-solution} into \refeq{eq:discrete-Euler-Darboux} we obtain
\begin{align*}
 &\big(\rho_i(n_i+1)-\rho_i(n_i)\big)\big(\rho_j(n_j+1) - \rho_j(n_j) \big)\\
 &\quad= \frac{ \gamma}{ n_i + \epsilon_i - n_j - \epsilon_j }  \Big( \rho_i(n_i)\big(\rho_j(n_j+1) -\rho_j(n_j)\big)-\rho_j(n_j) \big( \rho_i(n_i+1)-\rho_i(n_i)\big)\Big),
\end{align*}
    which is equivalent to
\[
   n_i + \epsilon_i - n_j - \epsilon_j
    = \frac{\gamma \rho_i(n_i)}{\rho_i(n_i+1) - \rho_i(n_i)}
    - \frac{\gamma \rho_j(n_j)}{\rho_j(n_j+1) - \rho_j(n_j)}.
\]
So, the expression 
\[
\frac{\gamma \rho_i(n_i)}{\rho_i(n_i+1) - \rho_i(n_i)}-(n_i+\epsilon_i)=c
\]
depends neither on $n_i$ nor on $n_j$, i.e., is a constant. This is equivalent to \eqref{eq:discrete-separable-solution-dgl-delta} and thus to \eqref{eq:discrete-separable-solution-dgl}.
\end{proof}

If the constants $\gamma$, $c$ and $\epsilon_i$ are such that neither $\epsilon_i + c$  nor $\epsilon_i + c + \gamma$ is an integer, then
the general solution of \refeq{eq:discrete-separable-solution-dgl} is given by
\begin{equation*}
  \rho_i(n_i,\epsilon_i+c) 
  = d_i \frac{\Gamma(n_i + \epsilon_i + c + \gamma)}{\Gamma(n_i + \epsilon_i + c)} 
  = \tilde{d}_i \frac{\Gamma(-n_i - \epsilon_i - c + 1)}{\Gamma(-n_i - \epsilon_i - c - \gamma + 1)}
\end{equation*}
for all $n_i \in \Z$ with some constants $d_i, \tilde{d_i} \in \R$.

In what follows, we will use the Pochhammer symbol $(u)_\gamma$ with a not necessarily integer index $\gamma$:
\begin{equation}\label{eq: Pochhammer gamma}
(u)_\gamma=\frac{\Gamma(u+\gamma)}{\Gamma(u)}, \quad u,\gamma>0.
\end{equation}
Because of the asymptotics $(u)_\gamma\sim u^\gamma$ as $u\to+\infty$, which can also be put as
\[
\lim_{\varepsilon\to 0}\varepsilon^\gamma \left(\frac{u}{\varepsilon}\right)_\gamma=u^\gamma,
\]
the function $(u)_\gamma$ has been considered as a discretization of $u^\gamma$ in \cite[p. 47]{Gelfand_et_al}. With this notation, the above formulas take the form 
\begin{equation*}
  \rho_i(n_i,\epsilon_i+c)  = d_i(n_i + \epsilon_i + c)_\gamma = \tilde{d}_i (-n_i - \epsilon_i - c - \gamma + 1)_\gamma.
\end{equation*}

\begin{definition}
The {\em discrete square root function} is defined by
\begin{equation}
(u)_{1/2}=\frac{\Gamma(u+\frac{1}{2})}{\Gamma(u)}.
\end{equation} 
\end{definition}

To achieve boundary conditions similar to \refeq{eq:boundary-conditions1} and \refeq{eq:boundary-conditions2},
we will be more interested in the cases where solutions are only defined on an integer half-axis, and vanish at its boundary point.
For $\gamma = \frac{1}{2}$ this is the case if:
\begin{itemize}
 \item either $\epsilon_i + c\in\Z$, and  then the general solution to the right of $-c -\epsilon_i$ is given by multiples of
    \begin{equation}
      \label{eq:discrete-separable-solution-version1}
      \rho_i(n_i, \epsilon_i + c) = (n_i + \epsilon_i + c)_{1/2}
      \quad \text{for} \quad n_i \geq -c - \epsilon_i,
      \end{equation}
 \item or $\epsilon_i + c +\frac{1}{2}\in\Z$, and then the general solution to the left of $-c -\epsilon + \frac{1}{2}$ is given by multiples of
   \begin{equation}
      \label{eq:discrete-separable-solution-version2}
      \rho_i(n_i, \epsilon_i + c) = \sqr{-n_i - \epsilon_i - c + \tfrac{1}{2}}
      \quad \text{for} \quad n_i \leq -c - \epsilon_i + \frac{1}{2}.
  \end{equation}
\end{itemize}
In terms of discrete square roots, the expressions for the separable solutions of the discrete Euler-Poisson-Darboux system for $\gamma=\frac{1}{2}$ now resemble their classical counterparts. 

\begin{proposition}\label{prop:boundary}
Let $\alpha_1, \ldots, \alpha_N \in \Z$ with $\alpha_1 > \alpha_2 > \cdots > \alpha_N$, and set
 \begin{equation*}
    \mathcal{U} =\set{ (n_1,\ldots,n_N) \in \Z^N }{ -\alpha_1 \leq n_1 \leq -\alpha_2 \leq n_2 \leq \cdots \leq -\alpha_N \leq n_N } .
 \end{equation*}  
Let ${x_k : \mathcal{U} \rightarrow \R_+}$, $k = 1, \ldots, N$, be $N$ independent separable solutions
of the discrete Euler-Poisson-Darboux system {\rm \refeq{eq:discrete-Euler-Darboux}} with $\gamma = \frac{1}{2}$ defined on $\mathcal U$
and satisfying the following boundary conditions:
  \begin{alignat}{2}
    \label{eq:discrete-boundary-conditions1}
    &x_k|_{n_k=-\alpha_k} = 0  &&\quad \text{for} \quad k= 1, \ldots, N,\\
    \label{eq:discrete-boundary-conditions2}
    &x_k |_{n_{k-1} = -\alpha_k} = 0 &&\quad \text{for} \quad k= 2, \ldots, N.
  \end{alignat}
 Then the shifts $\epsilon_i$ of the variables $n_i$ are given by
  \begin{equation}
    \label{eq:epsilons}
    \epsilon_i - \epsilon_j = \frac{j-i}{2} \quad\text{for} \quad i,j = 1, \ldots, N,
  \end{equation}
  and the solutions are expressed by
  \begin{equation}
    \label{eq:discrete-solutions-with-boundary-conditions1}
    x_k(n_1, \ldots, n_N) = D_k  \prod_{i=1}^N \rho_i^{(k)}(n_i),
  \end{equation} 
  for some constants $D_1,\ldots, D_N > 0$ and
  \begin{equation}
    \label{eq:discrete-solutions-with-boundary-conditions2}
    \rho_i^{(k)}(n_i) =
    \left\{ \begin{array}{ll}    
    \sqr{n_i + \alpha_k + \frac{k-i}{2}} \quad & \text{for} \quad i \geq k, \\ \\ 
    \sqr{-n_i - \alpha_k - \frac{k-i}{2} + \frac{1}{2}}  \quad & \text{for} \quad i < k.
     \end{array}\right.
  \end{equation}  
 \end{proposition}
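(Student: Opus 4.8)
The plan is to combine the separability criterion proved above with the two boundary conditions \eqref{eq:discrete-boundary-conditions1}, \eqref{eq:discrete-boundary-conditions2}, which will simultaneously pin down the differences $\epsilon_i - \epsilon_j$ and the separation constant of each solution. By the separable-solutions proposition above, every $x_k$ has the form $x_k = D_k \prod_{i=1}^N \rho_i(n_i, \epsilon_i + c_k)$, where each factor solves the first-order recurrence \eqref{eq:discrete-separable-solution-dgl} with one and the same constant $c_k$ (depending on the solution index $k$, but not on $i$). For $\gamma = \tfrac12$ the solutions of this recurrence that are defined on an integer half-axis and vanish at its endpoint are precisely the discrete square roots \eqref{eq:discrete-separable-solution-version1} and \eqref{eq:discrete-separable-solution-version2}, vanishing at the left, respectively the right, endpoint.

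On $\mathcal U$ the variable $n_i$ runs through the interval $[-\alpha_i, -\alpha_{i+1}]$ for $i < N$, while $n_N \ge -\alpha_N$ is unbounded above. Condition \eqref{eq:discrete-boundary-conditions1} forces the factor $\rho_k(\cdot\,, \epsilon_k + c_k)$ to vanish at the left endpoint $n_k = -\alpha_k$, so it is of type \eqref{eq:discrete-separable-solution-version1} with $\epsilon_k + c_k = \alpha_k$; condition \eqref{eq:discrete-boundary-conditions2} forces $\rho_{k-1}(\cdot\,, \epsilon_{k-1} + c_k)$ to vanish at the right endpoint $n_{k-1} = -\alpha_k$, so it is of type \eqref{eq:discrete-separable-solution-version2} with $\epsilon_{k-1} + c_k = \alpha_k + \tfrac12$. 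Subtracting these relations eliminates $c_k$ and gives $\epsilon_{k-1} - \epsilon_k = \tfrac12$ for every $k = 2, \ldots, N$; telescoping yields \eqref{eq:epsilons}. Moreover $c_k = \alpha_k - \epsilon_k$, so that $\epsilon_i + c_k = \alpha_k + (\epsilon_i - \epsilon_k) = \alpha_k + \tfrac{k-i}{2}$, and inserting this into \eqref{eq:discrete-separable-solution-version1} and \eqref{eq:discrete-separable-solution-version2} reproduces exactly the expressions \eqref{eq:discrete-solutions-with-boundary-conditions2}.

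The remaining, and I expect main, point is to justify the branch assignment for the factors $i \neq k, k-1$ that are required to vanish nowhere, namely that type \eqref{eq:discrete-separable-solution-version1} is used for all $i \ge k$ and type \eqref{eq:discrete-separable-solution-version2} for all $i < k$. Both formulas solve the same first-order recurrence, hence agree up to a constant wherever both are finite and nonzero; the decisive fact is that only one of them stays finite and positive across the whole coordinate interval of $n_i$, and which one is dictated by the ordering of the $\alpha$'s. For $i \ge k$ the argument $n_i + \alpha_k + \tfrac{k-i}{2}$ of \eqref{eq:discrete-separable-solution-version1} is increasing in $n_i$ and attains its minimum $(\alpha_k - \alpha_i) + \tfrac{k-i}{2} \ge \tfrac{i-k}{2} \ge 0$ at $n_i = -\alpha_i$, because the $\alpha_m$ are strictly decreasing integers; it is strictly positive for $i > k$ and vanishes only at the single boundary point when $i = k$. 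Symmetrically, for $i < k$ the argument of \eqref{eq:discrete-separable-solution-version2} is bounded below by $\tfrac{k-i-1}{2} \ge 0$, strictly positive for $i < k-1$ and vanishing only at $n_{k-1} = -\alpha_k$ when $i = k-1$. Since $\sqr{u}$ is positive for $u > 0$ but has poles at negative half-integers and zeros at non-positive integers, the opposite branch would leave the coordinate interval through a pole or a spurious zero and thus cannot represent the finite, positive, single-valued factor; this forces the stated assignment. The resulting product is then strictly positive on the interior of $\mathcal U$, so that $D_k > 0$, and assembling the $x_k$ gives the claimed net $\q = (x_1, \ldots, x_N)$.
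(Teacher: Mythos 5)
Your proof is correct and follows essentially the same route as the paper: both extract $\epsilon_k + c_k = \alpha_k$ and $\epsilon_{k-1} + c_k = \alpha_k + \tfrac{1}{2}$ from the two boundary conditions, telescope to obtain \eqref{eq:epsilons}, and substitute $\epsilon_i + c_k = \alpha_k + \tfrac{k-i}{2}$ into the two discrete-square-root formulas. Your third paragraph, which pins down the branch assignment for the factors with $i \neq k, k-1$ by checking the sign of the arguments over the range of $n_i$, is a correct elaboration of a step the paper's proof leaves implicit.
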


\begin{proof}
Separable solutions of \refeq{eq:discrete-Euler-Darboux} with $\gamma=\frac{1}{2}$ are of the general form \eqref{eq:discrete-solutions-with-boundary-conditions1},  where each $\rho_i^{(k)}(n_i)=\rho_i(n_i,\epsilon_i+c_k)$ is defined by one of the formulas \eqref{eq:discrete-separable-solution-version1}, \eqref{eq:discrete-separable-solution-version2}, and all multiplicative constants are collected in $D_1,\ldots, D_N > 0$. We have to determine suitable constants $\epsilon_i$ and 
$c_k$.
  
The boundary conditions \refeq{eq:discrete-boundary-conditions1} and \refeq{eq:discrete-boundary-conditions2}  imply that 
$x_k$ is defined for $n_k \geq -\alpha_k$, while vanishing for $n_k = -\alpha_k$, and also that $x_k$ is defined for $n_{k-1} \leq -\alpha_k$, while vanishing for $n_{k-1} = -\alpha_k$. This shows that
\[
\alpha_k = c_k + \epsilon_k=c_k + \epsilon_{k-1} - \frac{1}{2}.
\] 
We obtain $\epsilon_k- \epsilon_{k-1}  = -\frac{1}{2}$, and equation \eqref{eq:epsilons} follows. Together with $c_k + \epsilon_k = a_k$ this implies that
  \begin{equation}
   \label{eq: a vs c}
    c_k + \epsilon_i = \alpha_k + \frac{k-i}{2}.
  \end{equation}
It remains to substitute \eqref{eq: a vs c} into \refeq{eq:discrete-separable-solution-version1} and \refeq{eq:discrete-separable-solution-version2}.
\end{proof}

\subsection{Orthogonality}

The remaining scaling freedom (multiplicative constants $D_k$) of the components $x_k$ as given by (\ref{eq:discrete-solutions-with-boundary-conditions1}) is the same as in the continuous case. As we have seen in Proposition \ref{prop:Euler-Darboux-confocal-quadrics}, in the continuous case, one can fix the scaling by imposing the orthogonality condition $(\partial \q/\partial u_i) \perp (\partial \q/\partial u_j)$. In the discrete case, it 
turns out to be possible to introduce a notion of orthogonality, which will allow us to fix the scaling in a similar way. 

\begin{definition}\label{def: d ortho}
Let $\mathcal U\subset \Z^N$, $\mathcal U^*\subset {(\Z^N)}^*$, where ${(\Z^N)}^*=(\Z+\frac{1}{2})^N$. Consider a function
\bela{Z6a}
  \q : \mathcal{U}\cup\mathcal{U}^* \rightarrow \R^N,
\ela
such that both restrictions $\q(\mathcal U)$ and $\q(\mathcal U^*)$ are Q-nets. We say that the discrete net $\q$ is {\em orthogonal} if each edge of $\q(\mathcal U)$ is orthogonal to the dual facet of $\q(\mathcal U^*)$.
\end{definition}

\begin{figure}[H]
  \centering
  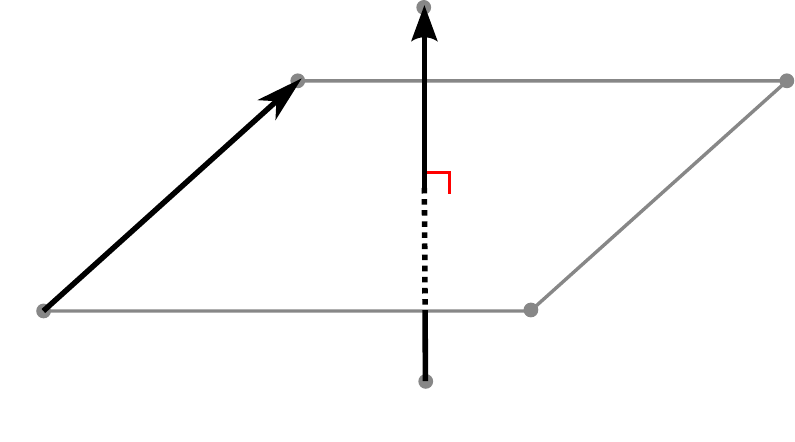
  \caption{Discrete orthogonality for a system of Q-nets defined on a square lattice and on its dual.}
  \label{fig:orthogonality}
\end{figure}

The (space of the) facet of $\q(\mathcal U^*)$ dual to the edge $[\q(\bn),\q(\bn+\be_i)]$ of $\q(\mathcal U)$ is spanned by the $N-1$ edges $[\q(\bn-\be_j+\frac{1}{2}\bef),\q(\bn+\frac{1}{2}\bef)]$ with $j\neq i$, where $\bef=(1,\ldots,1)$. Therefore, the orthogonality condition in the sense of Definition \ref{def: d ortho} reads:
\begin{equation}
    \label{eq:orthoganality-condition}
    \left\langle \Delta_i \q(\bn), \Delta_j \q(\bn-\be_j+\tfrac{1}{2}\bef) \right\rangle = 0
\end{equation}
for all $i \neq j$ and $\bn \in \Z^N$. 
From this it is easy to see that $\q(\mathcal U)$ and $\q(\mathcal U^*)$ actually play symmetric roles in Definition \ref{def: d ortho} (that is, each edge of $\q(\mathcal U^*)$ is orthogonal to the dual facet of $\q(\mathcal U)$, compare Fig.\ \ref{fig:orthogonality}). 
\medskip

Turning to separable solutions of the discrete Euler-Poisson-Darboux system {\rm \refeq{eq:discrete-Euler-Darboux}} with $\gamma = \frac{1}{2}$, we extend the function $\q = (x_1,\ldots,x_N)$ defined in Proposition \ref{prop:boundary} to a bigger domain:
\bela{Z5}
  \q : \mathcal{U}\cup\mathcal{U}^* \rightarrow \R^N_+,
\ela
where
\bela{Z6}
 \mathcal{U}^* =\set{ (n_1,\ldots,n_N) \in {(\Z^N)}^* }{ -\alpha_1 \leq n_1 \leq -\alpha_2 \leq n_2 \leq \cdots \leq -\alpha_N \leq n_N } .
\ela
It is emphasized that the lattices $\q(\mathcal{U})$ and $\q(\mathcal{U}^*)$ are on equal footing except that the boundary conditions do not apply to $\q(\mathcal{U}^*)$.

\begin{proposition}
  \label{prop:discrete-orthogonality}
  Let $\alpha_1, \ldots, \alpha_N \in \Z$ with $\alpha_1 > \alpha_2 > \cdots > \alpha_N$.
  Then the net $\q : \mathcal{U}\cup\mathcal{U}^* \rightarrow \R^N_+$ 
  defined by (\ref{eq:discrete-solutions-with-boundary-conditions1}) 
  and (\ref{eq:discrete-solutions-with-boundary-conditions2}) is orthogonal
 if and only if
  \begin{equation}
    \label{eq:D-solutions}
    D_k^{-2} = C \prod_{i < k} (\alpha_i - \alpha_k + \tfrac{i-k}{2}) \prod_{i > k} (\alpha_k - \alpha_i + \tfrac{k-i}{2})
  \end{equation}
  with some $C \in \R_+$.
\end{proposition}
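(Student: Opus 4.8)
The plan is to transcribe, step for step, the continuous computation in the proof of Proposition~\ref{prop:Euler-Darboux-confocal-quadrics}(b), with two replacements dictated by the discrete setting: derivatives become the staggered differences of Definition~\ref{def: d ortho}, and the identity $\rho_i(u_i,a_k)^2=\pm(u_i+a_k)$ becomes a collapse of staggered products of discrete square roots. First I would establish the discrete analog of \refeq{eq:first-derivatives}: from the recurrence \refeq{eq:discrete-separable-solution-dgl} with $\gamma=\frac12$ and the identification \refeq{eq: a vs c}, a short calculation using $\Gamma(x+1)=x\Gamma(x)$ gives, uniformly in $k$ and in both cases $i\ge k$ and $i<k$,
\[
\Delta_i x_k(\bn)=x_k(\bn)\,\frac{1/2}{\,n_i+\alpha_k+\frac{k-i}{2}\,}.
\]
Evaluating the same formula at the half-integer point $\bn^*:=\bn-\be_j+\frac12\bef$, whose $j$-th coordinate is $n_j-\frac12$, gives $\Delta_j x_k(\bn^*)=x_k(\bn^*)\cdot\frac12\big/\big(n_j-\frac12+\alpha_k+\frac{k-j}{2}\big)$.

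The crux is the identity, immediate from $\Gamma(x+1)=x\Gamma(x)$, that for every real $v$
\[
\sqr{v}\,\sqr{v+\tfrac12}=v,\qquad \sqr{v}\,\sqr{v-\tfrac12}=v-\tfrac12 .
\]
Applying it to each factor of $x_k(\bn)x_k(\bn^*)=D_k^2\prod_l\rho_l^{(k)}(n_l)\rho_l^{(k)}(n_l^*)$, where $n_l^*=n_l+\frac12$ for $l\ne j$ and $n_j^*=n_j-\frac12$, turns every discrete square root into an honest linear factor: for $l\ne j$ one gets $\pm\big(n_l+\alpha_k+\frac{k-l}{2}\big)$ (sign $+$ for $l\ge k$, $-$ for $l<k$), which is the discrete shadow of $\rho_i(u_i,a_k)^2=\pm(u_i+a_k)$ and is exactly why the half-integer dual lattice $\mathcal U^*$ is the correct carrier of orthogonality. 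Substituting into \refeq{eq:orthoganality-condition} and cancelling the two denominators against the $i$- and $j$-factors — a short case distinction shows the $j$-factor together with its denominator contributes $+1$ for $j\ge k$ and $-1$ for $j<k$ — all dependence on $n_i,n_j$ drops out, and the surviving signs multiply to $\prod_l(\pm1)=(-1)^{k-1}$. Writing $\hat u_l:=n_l-\frac{l}{2}$ and $\hat\alpha_k:=\alpha_k+\frac{k}{2}$, this yields
\[
\big\langle \Delta_i\q(\bn),\,\Delta_j\q(\bn^*)\big\rangle
=\frac14\sum_{k=1}^N(-1)^{k-1}D_k^2\prod_{l\ne i,j}\big(\hat u_l+\hat\alpha_k\big),
\]
the exact discrete counterpart of \refeq{eq: cont scalar product}.

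From here the argument is identical to the continuous one. The right-hand side is multilinear in the variables $\hat u_l$, $l\ne i,j$; since the admissible $\bn$ range over a product of integer intervals in which each coordinate $n_l$ takes at least two values, orthogonality for all such $\bn$ and all $i\ne j$ is equivalent to the vanishing of every coefficient, i.e.\ to the homogeneous system $\sum_{k=1}^N(-1)^{k-1}\hat\alpha_k^{\,m}D_k^2=0$ for $m=0,\ldots,N-2$. This is a Vandermonde system in the nodes $\hat\alpha_k$, with solution unique up to a common factor, $(-1)^{k-1}D_k^2=C'/\prod_{i\ne k}(\hat\alpha_k-\hat\alpha_i)$. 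Splitting the product at $i<k$ and $i>k$ and using $\hat\alpha_k-\hat\alpha_i=(\alpha_k-\alpha_i)+\frac{k-i}{2}$ rewrites this as \refeq{eq:D-solutions}; the positivity of every factor there, which follows from the strict monotonicity of the integer sequence $\alpha_k$, confirms that $C>0$ yields $D_k^2>0$.

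The main obstacle is the sign bookkeeping of the second paragraph: once the staggered square-root identities are in hand, the real work is to verify that the $i$-, $j$- and $l$-contributions assemble into the single factor $(-1)^{k-1}$ and that no residual $n_i$- or $n_j$-dependence survives. After the scalar product is brought to the displayed Vandermonde form, the remaining linear algebra is routine and transfers verbatim from Proposition~\ref{prop:Euler-Darboux-confocal-quadrics}.
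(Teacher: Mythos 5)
Your proposal is correct and follows essentially the same route as the paper's proof: derive the discrete analog of \refeq{eq:first-derivatives} from \refeq{eq:discrete-separable-solution-dgl-delta}, collapse the staggered products of discrete square roots via $\sqr{v}\sqr{v+\frac12}=v$ to reduce the scalar product \refeq{eq:orthoganality-condition} to the continuous expression \refeq{eq: cont scalar product} under $a_k=\alpha_k+\frac{k}{2}$, $u_l=n_l-\frac{l}{2}$, and then repeat the Vandermonde argument of Proposition \ref{prop:Euler-Darboux-confocal-quadrics}(b). Your sign bookkeeping (the $(-1)^{k-1}$ and the $\pm1$ from the $i$- and $j$-factors) checks out and matches the paper's \refeq{eq: rho rho}.
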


\begin{proof} We will use the following formulas for the ``discrete derivative'' of the ``discrete square root function'' $(u)_{1/2}=\Gamma(u+\tfrac{1}{2})/\Gamma(u)$, which are immediate consequences of the identity $\Gamma(u+1) = u\Gamma(u)$:
\bela{E3}
  \Delta \big(\sqr{u}\big) = \frac{1}{2\sqr{u+\frac{1}{2}}}, \qquad \Delta\big(\sqr{-u}\big) = -\frac{1}{2\sqr{-u-\frac{1}{2}}},
\ela
where $\Delta f(u) = f(u+1) - f(u)$. We also note that the ``discrete squares'' of discrete square roots obey the relations
\bela{E4a}
 \textstyle  \sqr{u}\sqr{u+\frac{1}{2}} = u,\qquad \sqr{-u}\sqr{-u-\frac{1}{2}} = -u-\frac{1}{2}.
\ela

Substituting (\ref{eq: a vs c}) and $\gamma = \frac{1}{2}$ into (\ref{eq:discrete-separable-solution-dgl-delta}), we obtain:
\begin{equation}\label{eq: Delta rho} 
   \Delta \rho^{(k)}_i(n_i) = \frac{\rho^{(k)}_i(n_i)}{2(n_i + \alpha_k + \frac{k - i}{2})}.
\end{equation}
Upon using property \eqref{E4a} and expressions \eqref{eq:discrete-solutions-with-boundary-conditions2}, we arrive at
\begin{equation}\label{eq: rho rho}
\rho_i^{(k)}(n_i)\rho_i^{(k)}(n_i+\tfrac{1}{2})=\left\{\begin{array}{ll} n_i+\alpha_k+\tfrac{k-i}{2}, & i\ge k, \\ \\
-(n_i+\alpha_k+\tfrac{k-i}{2}),  & i<k. \end{array}\right.
\end{equation}
We use \eqref{eq:discrete-solutions-with-boundary-conditions1}, \eqref{eq: Delta rho},  \eqref{eq: rho rho} to compute the left-hand side of equation \eqref{eq:orthoganality-condition}:
\[
   \left\langle  \Delta_i \q(\bn) , \Delta_j \q(\bn-\be_j+\tfrac{1}{2}\bef)\right\rangle 
    = \frac{1}{4} \sum_{k=1}^N (-1)^{k-1} D_k^2 \prod_{l\neq i,j} \left( n_l + \alpha_k + \frac{k-l}{2} \right).
\]
Observe that this literally coincides with the analogous expression in the continuous case \eqref{eq: cont scalar product}, if we set
\begin{equation}
a_k=\alpha_k+\frac{k}{2}, \quad u_l=n_l-\frac{l}{2}.
\end{equation}
Therefore, the proof of part b) of Proposition \ref{prop:Euler-Darboux-confocal-quadrics} can be literally repeated, leading to the condition $D_k^{-2}=C\prod_{i<k} (a_i-a_k)\prod_{i>k} (a_k-a_i)$, which coincides with \eqref{eq:D-solutions}. 
\end{proof}

\subsection{Definition of discrete confocal coordinates}

\begin{definition}
  \label{def:discrete-elliptic-coordinates}
  Let $\alpha_1, \ldots, \alpha_N \in \Z$ with $\alpha_1 > \alpha_2 > \cdots > \alpha_N$. \emph{Discrete confocal coordinates} are given by the discrete net $\q : \mathcal{U}\cup\mathcal{U}^* \rightarrow \R^N_+$ defined by
  \begin{equation}
    \label{eq:discrete-elliptic-coordinates}
    x_k(\bn) = D_k \prod_{i=1}^{k-1} \sqr{-n_i - \alpha_k - \tfrac{k-i}{2} + \tfrac{1}{2}} 
    \prod_{i=k}^N \sqr{n_i + \alpha_k + \tfrac{k-i}{2}} 
  \end{equation}
  with
  \begin{equation}
    \label{eq:D}
    D_k^{-1} = \prod_{i=1}^{k-1} \sqrt{\alpha_i - \alpha_k + \tfrac{i-k}{2}} 
    \prod_{i=k+1}^{N} \sqrt{\alpha_k - \alpha_i + \tfrac{k-i}{2}} .
  \end{equation}
\end{definition}

The characteristic properties of this net can be summarized as follows.
  \begin{itemize} 
  \item[(i)] Each two-dimensional subnet of $\q(\mathcal U)$ and of $\q(\mathcal U^*)$ satisfies  \eqref{eq:discrete-Euler-Darboux} with $\gamma=\frac{1}{2}$;
  \item[(ii)] Therefore each two-dimensional subnet of $\q(\mathcal U)$ and of $\q(\mathcal U^*)$ is a Koenigs net;
  \item[(iii)] The net $\q(\mathcal U\cup\mathcal U^*)$ is orthogonal in the sense of Definition \ref{def: d ortho};
  \item[(iv)] Both nets $\q(\mathcal U)$ and  $\q(\mathcal U^*)$ are separable;
  \item[(v)] Boundary conditions  \eqref{eq:discrete-boundary-conditions1}, \eqref{eq:discrete-boundary-conditions2} are satisfied.
  \end{itemize}
  
Properties (ii) and (iii) lead to a novel discretization of the notion of isothermic surfaces. 
 
Property (v) allows us to define  \emph{discrete confocal quadrics} by reflecting the net $\q$  in the coordinate hyperplanes. Like in the continuous case, the boundary conditions  correspond to the $2N - 1$ degenerate quadrics of the confocal family lying in the coordinate hyperplanes.

\begin{remark}
  In \cite{BSST-II} we will describe more general  discrete confocal quadrics, corresponding to general reparametrizations of the curvature lines. They will be defined in a more geometric way, less based on integrable difference equations.
\end{remark}

\subsection{Further properties of discrete confocal coordinates}
\label{sec:properties}

We now obtain a variety of properties of discrete confocal quadrics and discrete confocal coordinates, which serve as discretizations of their respective continuous analogs.

\begin{proposition}
For any $N$-tuple of signs $\bsigma=(\sigma_1,\ldots,\sigma_N)$, $\sigma_i=\pm 1$,  we have:
\begin{equation}
  \label{eq:component-squares-general}
  x_k(\bn) \cdot x_k(\bn+\tfrac{1}{2}\bsigma) 
  = \frac{\prod_{i=1}^N \left( n_i + \alpha_k + \frac{k-i}{2} - \frac{1}{4}(1-\sigma_i)\right)}
  {\prod_{i \neq k} \left( \alpha_k - \alpha_i + \frac{k-i}{2} \right)},
\end{equation}
and therefore
\begin{equation}
  \label{eq:discrete-confocal-quadric-equation-general}
  \sum_{k=1}^N \frac{ x_k(\bn) x_k(\bn+\tfrac{1}{2}\bsigma) }{ n_i + \alpha_k + \frac{k-i}{2} - \frac{1}{4} (1-\sigma_i)} = 1, \qquad i=1,\ldots,N.
\end{equation}
\end{proposition}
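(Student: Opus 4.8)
The plan is to establish the product formula \eqref{eq:component-squares-general} first by a direct factor-by-factor computation, and then to deduce the ``quadric equation'' \eqref{eq:discrete-confocal-quadric-equation-general} from it by a Lagrange-interpolation (partial-fraction) argument that mirrors the continuous case.

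For \eqref{eq:component-squares-general}, I would write both $x_k(\bn)$ and $x_k(\bn+\tfrac12\bsigma)$ as products over $i=1,\ldots,N$ using \eqref{eq:discrete-elliptic-coordinates}, and then pair the two factors carrying the same index $i$. The essential tool is the identity $\sqr{u}\,\sqr{u+\tfrac12}=u$ from \eqref{E4a}, which says that the product of two discrete square roots whose arguments differ by $\tfrac12$ equals the smaller of the two arguments. For $i\ge k$ the paired arguments are $n_i+\alpha_k+\tfrac{k-i}{2}$ and its shift by $\tfrac12\sigma_i$, so the product collapses to $n_i+\alpha_k+\tfrac{k-i}{2}-\tfrac14(1-\sigma_i)$ in both sign cases $\sigma_i=\pm1$. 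For $i<k$ the same computation, now applied to the argument $-n_i-\alpha_k-\tfrac{k-i}{2}+\tfrac12$, yields exactly the \emph{negative} of this quantity. Thus each of the $k-1$ indices with $i<k$ contributes an extra factor $-1$, producing an overall sign $(-1)^{k-1}$, and I obtain
\[
x_k(\bn)\,x_k(\bn+\tfrac12\bsigma)=(-1)^{k-1}D_k^2\prod_{i=1}^N\left(n_i+\alpha_k+\tfrac{k-i}{2}-\tfrac14(1-\sigma_i)\right).
\]

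It then remains to observe, from \eqref{eq:D}, that $D_k^{-2}=(-1)^{k-1}\prod_{i\neq k}\big(\alpha_k-\alpha_i+\tfrac{k-i}{2}\big)$: indeed each of the $k-1$ factors with $i<k$ appearing in $D_k^{-2}$ is the negative of $\alpha_k-\alpha_i+\tfrac{k-i}{2}$, which accounts for precisely one sign $(-1)^{k-1}$. The two occurrences of $(-1)^{k-1}$ cancel, giving \eqref{eq:component-squares-general}. For the second statement, I would substitute \eqref{eq:component-squares-general} into the sum and introduce the abbreviations $\hat a_k=\alpha_k+\tfrac{k}{2}$ and $\hat u_l=n_l-\tfrac{l}{2}-\tfrac14(1-\sigma_l)$, under which the generic numerator factor becomes $\hat u_l+\hat a_k$ and the denominator factor becomes $\hat a_k-\hat a_m$. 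After cancelling the $i$-th factor against the denominator of the $i$-th summand, the claim reduces to the purely algebraic identity
\[
\sum_{k=1}^N\frac{\prod_{l\neq i}(\hat u_l+\hat a_k)}{\prod_{m\neq k}(\hat a_k-\hat a_m)}=1,
\]
which is exactly the relation already exploited in the proof of Proposition~\ref{prop:Euler-Darboux-confocal-quadrics}. I would prove it by reading off the coefficient of $\lambda^{N-1}$ in the Lagrange interpolation of the degree-$(N-1)$ polynomial $\lambda\mapsto\prod_{l\neq i}(\lambda+\hat u_l)$ through the nodes $\hat a_1,\ldots,\hat a_N$; since that polynomial is monic, the coefficient equals $1$.

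The main obstacle is the sign bookkeeping in the first part: one must carefully separate the ranges $i\ge k$ and $i<k$ and the two choices $\sigma_i=\pm1$, and then match the resulting $(-1)^{k-1}$ against the sign hidden in the normalization $D_k$. Once the product identity \eqref{E4a} is applied uniformly and the sign of $D_k^{-2}$ is pinned down, both statements follow, the second being a formal consequence of the first together with the standard interpolation identity.
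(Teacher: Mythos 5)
Your proof is correct and takes essentially the same route as the paper: the paper labels \eqref{eq:component-squares-general} a ``straightforward computation'', which you carry out explicitly with the right sign bookkeeping --- each index $i<k$ contributing a factor $-1$ via \eqref{E4a}, cancelled against the sign $(-1)^{k-1}$ hidden in the normalization \eqref{eq:D}. For \eqref{eq:discrete-confocal-quadric-equation-general} you perform the same substitution $a_k=\alpha_k+\tfrac{k}{2}$, $u_i=n_i-\tfrac{i}{2}-\tfrac14(1-\sigma_i)$ as the paper, merely making explicit (via Lagrange interpolation of a monic degree-$(N-1)$ polynomial) that the continuous relation being invoked is a purely algebraic identity, so the formal identification is legitimate.
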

\begin{proof} Equation \eqref{eq:component-squares-general} is obtained by straightforward computation.
  Using this result, equation \eqref{eq:discrete-confocal-quadric-equation-general}  follows from the continuous equations 
  \eqref{eq:confocal-quadrics}, \eqref{eq:elliptic-coordinates-squares}
  upon replacing $a_k = \alpha_k + \frac{k}{2}$ and $u_i = n_i - \frac{i}{2} - \frac{1}{4}(1-\sigma_i)$.
\end{proof}

We notice that \eqref{eq:component-squares-general} can be seen as a discrete version of the parametrization formulas \eqref{eq:elliptic-coordinates-squares}, while \eqref{eq:discrete-confocal-quadric-equation-general} can be seen as a discrete version of the quadric equation \eqref{eq:confocal-quadrics}. The above formulas  take the simplest form for $\bsigma=\bef=(1,\ldots,1)$:

\begin{equation}
  \label{eq:component-squares}
  x_k(\bn) \cdot x_k(\bn+\tfrac{1}{2}\bef) 
  = \frac{\prod_{i=1}^N \left( n_i + \alpha_k + \frac{k-i}{2} \right)}
  {\prod_{i \neq k} \left( \alpha_k - \alpha_i + \frac{k-i}{2} \right)}
\end{equation}
and
\begin{equation}
  \label{eq:discrete-confocal-quadric-equation}
  \sum_{k=1}^N \frac{ x_k(\bn) x_k(\bn+\tfrac{1}{2}\bef) }{ n_i + \alpha_k + \frac{k-i}{2} } = 1, \qquad i=1,...,N.
\end{equation}

In the continuous setting one can obtain from \eqref{eq:elliptic-coordinates-squares}
\begin{equation}
  \label{eq:product1}
  \scalarprod{\q(\bu)}{\q(\bu)} = \sum_{k=1}^N x_k^2(\bu) = \sum_{k=1}^N (u_k + a_k),
\end{equation}
so that the hypersurfaces $\sum_{k=1}^N u_k = {\rm const}$ are (parts) of spheres.
In particular, this implies that
\begin{equation}  \label{eq:product2}
  \left\langle \q,\frac{\partial \q}{\partial u_i}\right\rangle = \frac{1}{2},
\end{equation}
for all $i=1,\ldots,N$,
which can be regarded as a characterization of the particular parametrization 
of the confocal quadrics considered in this paper.
In the discrete case one obtains the following:

\begin{proposition}
  For any $N$-tuple of signs $\bsigma=(\sigma_1,\ldots,\sigma_N)$, $\sigma_k=\pm 1$,  we have:
  \begin{equation}  \label{eq:discrete-product1}
      \scalarprod{\q(\bn)}{\q(\bn + \tfrac{1}{2}\bsigma)} 
      = \sum_{k=1}^N \left( n_k + \alpha_k - \tfrac{1}{4}(1-\sigma_k) \right),
  \end{equation}
  and therefore, for any $i=1,\ldots,N$ and for any $\bsigma$ with $\sigma_i = -1$:
  \begin{equation}
    \label{eq:discrete-product2}
    \scalarprod{\q(\bn)}{\Delta_i\q(\bn+ \tfrac{1}{2}\bsigma)} = \frac{1}{2}.
  \end{equation}
\end{proposition}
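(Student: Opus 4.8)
The plan is to read off both identities directly from the product formula \eqref{eq:component-squares-general} together with the continuous identity \eqref{eq:product1}, using the same substitution that drove the proof of Proposition~\ref{prop:discrete-orthogonality}. Since $\scalarprod{\q(\bn)}{\q(\bn+\tfrac12\bsigma)}=\sum_{k=1}^N x_k(\bn)\,x_k(\bn+\tfrac12\bsigma)$, I would substitute the right-hand side of \eqref{eq:component-squares-general} into this sum to get
\[
\scalarprod{\q(\bn)}{\q(\bn+\tfrac12\bsigma)}
=\sum_{k=1}^N\frac{\prod_{i=1}^N\bigl(n_i+\alpha_k+\tfrac{k-i}{2}-\tfrac14(1-\sigma_i)\bigr)}{\prod_{i\neq k}\bigl(\alpha_k-\alpha_i+\tfrac{k-i}{2}\bigr)}.
\]
Setting $a_k=\alpha_k+\tfrac k2$ and $u_i=n_i-\tfrac i2-\tfrac14(1-\sigma_i)$, each numerator factor $n_i+\alpha_k+\tfrac{k-i}{2}-\tfrac14(1-\sigma_i)$ becomes $u_i+a_k$ and each denominator factor $\alpha_k-\alpha_i+\tfrac{k-i}{2}$ becomes $a_k-a_i$, so the whole sum turns into $\sum_{k=1}^N\prod_{i=1}^N(u_i+a_k)/\prod_{i\neq k}(a_k-a_i)$.

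Next I would invoke the continuous identity \eqref{eq:product1}, reading it not as a geometric statement but as the universal algebraic identity
\[
\sum_{k=1}^N\frac{\prod_{i=1}^N(u_i+a_k)}{\prod_{i\neq k}(a_k-a_i)}=\sum_{k=1}^N(u_k+a_k),
\]
valid for arbitrary $u_1,\dots,u_N$ and pairwise distinct $a_1,\dots,a_N$. Indeed, the left-hand side is the divided difference $[a_1,\dots,a_N]\,Q$ of the monic polynomial $Q(t)=\prod_i(u_i+t)$ of degree $N$, whose value is the sum of its subleading coefficient $\sum_i u_i$ and the first power sum $\sum_i a_i$ of the nodes. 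The shifted values $a_k=\alpha_k+\tfrac k2$ are pairwise distinct because $\alpha_k-\alpha_{k+1}\ge1$, so the identity applies; translating $\sum_k(u_k+a_k)$ back through the substitution gives $\sum_{k=1}^N\bigl(n_k+\alpha_k-\tfrac14(1-\sigma_k)\bigr)$, which is \eqref{eq:discrete-product1}.

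For \eqref{eq:discrete-product2} I would expand the difference operator as $\Delta_i\q(\bn+\tfrac12\bsigma)=\q(\bn+\tfrac12\bsigma+\be_i)-\q(\bn+\tfrac12\bsigma)$ and observe that, since $\sigma_i=-1$, one has $\tfrac12\bsigma+\be_i=\tfrac12\bsigma'$, where $\bsigma'$ is the sign vector obtained from $\bsigma$ by flipping its $i$-th entry from $-1$ to $+1$. Both $\scalarprod{\q(\bn)}{\q(\bn+\tfrac12\bsigma')}$ and $\scalarprod{\q(\bn)}{\q(\bn+\tfrac12\bsigma)}$ are then given by \eqref{eq:discrete-product1}, and their difference collapses to $\sum_k\tfrac14(\sigma'_k-\sigma_k)$. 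All summands with $k\neq i$ vanish, while the $k=i$ term equals $\tfrac14\bigl(1-(-1)\bigr)=\tfrac12$, giving the claim.

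The argument is essentially mechanical, and the only point requiring care is the justification in the second paragraph that \eqref{eq:product1} may be used for arbitrary, in particular half-integer-shifted, arguments $u_i$ rather than only for $(u_1,\dots,u_N)$ in the domain $\mathcal U$. This is the single conceptual step, and it becomes immediate once \eqref{eq:product1} is recognized as the divided-difference (Lagrange) identity above; everything else is bookkeeping with the substitution $a_k=\alpha_k+\tfrac k2$, $u_i=n_i-\tfrac i2-\tfrac14(1-\sigma_i)$.
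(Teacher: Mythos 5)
Your proof is correct and follows essentially the same route as the paper: substitute \eqref{eq:component-squares-general} into the scalar product, identify it with the right-hand side of \eqref{eq:elliptic-coordinates-squares} via $a_k=\alpha_k+\tfrac k2$, $u_i=n_i-\tfrac i2-\tfrac14(1-\sigma_i)$, and import the continuous identity \eqref{eq:product1}. Your divided-difference justification that \eqref{eq:product1} is a universal polynomial identity (so it applies to the shifted arguments) is a point the paper leaves implicit, and your derivation of \eqref{eq:discrete-product2} by flipping the $i$-th sign is the intended one.
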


\begin{proof} The right-hand sides of \eqref{eq:elliptic-coordinates-squares} and \eqref{eq:component-squares-general}
 may be identified by setting $a_k = \alpha_k + \frac{k}{2}$ and $u_i = n_i - \frac{i}{2} - \frac{1}{4}(1-\sigma_i)$ and hence the right-hand side of
\eqref{eq:product1} also applies in the discrete case, leading upon the above identification to \eqref{eq:discrete-product1}.
\end{proof}

Finally, we obtain a factorization similar to \eqref{eq:is prop}, \eqref{eq: s}, \eqref{eq: alphas},
which characterizes isothermic surfaces in the continuous case.
\begin{proposition} \label{prop: discrete factorization}
For $i\neq j$ we have
\begin{equation}
\scalarprod{ \Delta_i \q(\n) }{ \Delta_i \q(\n + \tfrac{1}{2}\bef)}=s^2\phi_i(n_i),
\end{equation}
\begin{equation}
\scalarprod{ \Delta_j \q(\n  - \be_j + \tfrac{1}{2}\bef) }{ \Delta_j \q(\n  - \be_j + \bef) } =s^2\phi_j(n_j),
\end{equation}
where
\begin{equation}
s(n_i,n_j)=\left | n_i-n_j+\frac{j-i}{2}+\frac{1}{2} \right |^{1/2},
\end{equation}
and
\begin{equation}
\frac{\phi_i(n_i)}{\phi_j(n_j)} = -\frac{\prod_{m\neq i,j} (n_i - n_m + \frac{m-i}{2} + \frac{1}{2})}{\prod_{m=1}^N (n_i + \alpha_m - \frac{m-i}{2} + \frac{1}{2})}
\cdot  \frac{\prod_{m=1}^N (n_j + \alpha_m - \frac{m-j}{2})}{\prod_{m\neq i,j} (n_j - n_m + \frac{m-j}{2} - \frac{1}{2})}.
\end{equation}
\end{proposition}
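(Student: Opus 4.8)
The plan is to mimic the continuous computation from the proof of Proposition~\ref{prop: isothermic}, reducing the two discrete scalar products to the confocal quadric identity \eqref{eq: equation factorized} under the now-familiar substitution $a_k=\alpha_k+\tfrac{k}{2}$, $u_l=n_l-\tfrac{l}{2}$. The one genuinely new feature is that the half-integer shift by $\tfrac12\bef$ splits what is a \emph{double} pole in the continuous case into two \emph{distinct simple} poles, so the computation does not reduce verbatim to the continuous one and a partial-fraction step is required.

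First I would record the discrete analog of \eqref{eq:first-derivatives}: from \eqref{eq: Delta rho} and the product structure \eqref{eq:discrete-solutions-with-boundary-conditions1} one gets
\[
  \Delta_i x_k(\bn)=\frac{x_k(\bn)}{2\left(n_i+\alpha_k+\frac{k-i}{2}\right)},
\]
and, since \eqref{eq: Delta rho} holds as a functional identity in its argument, the same relation evaluated at the shifted point gives $\Delta_i x_k(\bn+\tfrac12\bef)=x_k(\bn+\tfrac12\bef)\big/\bigl(2(n_i+\tfrac12+\alpha_k+\tfrac{k-i}{2})\bigr)$. Multiplying these, summing over $k$, inserting the discrete square formula \eqref{eq:component-squares}, and applying the substitution above yields
\[
  \scalarprod{\Delta_i\q(\bn)}{\Delta_i\q(\bn+\tfrac12\bef)}
  =\frac14\sum_{k=1}^N
  \frac{\prod_{m=1}^N(u_m+a_k)}{\prod_{m\neq k}(a_k-a_m)}\cdot
  \frac{1}{(u_i+a_k)(u_i+a_k+\tfrac12)},
\]
where the first fraction is exactly the continuous $x_k^2$ of \eqref{eq:elliptic-coordinates-squares}.

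The crux is then the partial-fraction identity $\frac{1}{(u_i+a_k)(u_i+a_k+\frac12)}=2\bigl(\frac{1}{u_i+a_k}-\frac{1}{u_i+a_k+\frac12}\bigr)$, which turns the sum into the difference of two copies of the left-hand side of \eqref{eq: equation factorized}, evaluated at $\lambda=u_i$ and at $\lambda=u_i+\tfrac12$. The first evaluation equals $1$ because the factor $(\lambda-u_i)$ in the numerator of \eqref{eq: equation factorized} vanishes; the second produces $1-\prod_m(u_i+\tfrac12-u_m)\big/\prod_m(u_i+\tfrac12+a_m)$. Subtracting, the two $1$'s cancel, and after splitting off the trivial $m=i$ factor $\tfrac12$ from the numerator one is left with
\[
  \scalarprod{\Delta_i\q(\bn)}{\Delta_i\q(\bn+\tfrac12\bef)}
  =\frac14\,\frac{\prod_{m\neq i}\bigl(u_i+\tfrac12-u_m\bigr)}{\prod_{m=1}^N\bigl(u_i+\tfrac12+a_m\bigr)}.
\]
Translating back, the $m=j$ factor of the numerator is precisely $n_i-n_j+\tfrac{j-i}{2}+\tfrac12$, which I would extract as $s^2$ in \eqref{eq: s}, leaving $\phi_i(n_i)$ as the remaining quotient depending on $n_i$ only (the frozen $n_m$, $m\neq i,j$, entering as parameters).

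Finally I would treat the second scalar product by the identical chain of steps applied at the base point $\bn-\be_j+\tfrac12\bef$ with the roles of $i$ and $j$ interchanged. The one thing to check carefully is that extracting the $m=i$ factor there produces $-(n_i-n_j+\tfrac{j-i}{2}+\tfrac12)$, i.e. the \emph{same} $s^2$ up to sign, so that a single common conformal factor $s$ governs both coefficients; this is exactly what renders the net isothermic in the sense of \eqref{eq:is prop}. The main obstacle is organizational rather than conceptual: keeping the half-integer shifts and the index bookkeeping consistent so that the partial-fraction cancellation collapses to the two quadric evaluations and the common $s^2$ is extracted identically in both equations.
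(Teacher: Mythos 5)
Your derivation is correct and arrives at the same intermediate formula as the paper, namely $\scalarprod{\Delta_i\q(\bn)}{\Delta_i\q(\bn+\tfrac{1}{2}\bef)}=\tfrac{1}{4}\prod_{m\neq i}(u_i+\tfrac{1}{2}-u_m)\big/\prod_{m=1}^N(u_i+\tfrac{1}{2}+a_m)$, but by a genuinely different mechanism. The paper also starts from the componentwise product $\Delta_i x_k(\bn)\,\Delta_i x_k(\bn+\tfrac{1}{2}\bef)$, but instead of partial fractions it notes that the factor $(n_i+\alpha_k+\tfrac{k-i}{2})$ supplied by \refeq{eq:component-squares} cancels one of the two poles, leaving a single simple pole at $n_i+\alpha_k+\tfrac{k-i}{2}+\tfrac{1}{2}$; the resulting expression is then \emph{literally} the continuous formula for $(\partial x_k/\partial u_i)^2$ under the non-uniform substitution $a_k=\alpha_k+\tfrac{k}{2}$, $u_m=n_m-\tfrac{m}{2}$ for $m\neq i$ but $u_i=n_i-\tfrac{i}{2}+\tfrac{1}{2}$, after which the already-established identity \refeq{eq: id squares} is quoted verbatim. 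You keep the uniform substitution and instead re-derive the needed summation identity by evaluating \refeq{eq: equation factorized} at $\lambda=u_i$ and $\lambda=u_i+\tfrac{1}{2}$ — in effect replacing the $\lambda$-derivative that yields \refeq{eq: id squares} by a difference quotient of step $\tfrac{1}{2}$. The paper's route is shorter because it reuses \refeq{eq: id squares} as a black box; yours is more self-contained and makes the provenance of the $+\tfrac{1}{2}$ shifts transparent. Both treatments of the second scalar product and of the extraction of the common factor $s^2$ (equal up to sign in the two numerators) coincide. A reassuring byproduct of your computation: your denominator $\prod_m(u_i+\tfrac{1}{2}+a_m)=\prod_m(n_i+\alpha_m+\tfrac{m-i}{2}+\tfrac{1}{2})$ agrees with the $N=2$ specialization \refeq{E24ccca}, which shows that the signs $-\tfrac{m-i}{2}$ and $-\tfrac{m-j}{2}$ appearing in the printed formula for $\phi_i/\phi_j$ are typos for $+\tfrac{m-i}{2}$ and $+\tfrac{m-j}{2}$; your result is the correct one.
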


\begin{proof}
  For any $i, k$ we compute
  \begin{equation*}
    \Delta_i x_k(\n) \cdot \Delta_i x_k(\n + \tfrac{1}{2}\bef)
    = \frac{1}{4}\frac{1}{\prod_{m\neq k}(\alpha_k-\alpha_m+\frac{k-m}{2})} \frac{\prod_{m\neq i}(n_m+ \alpha_k + \frac{k-m}{2})}{n_i + \alpha_k + \frac{k-i}{2} + \frac{1}{2}}.
  \end{equation*}
  Here, the right-hand side can be identified with the right-hand side of the corresponding continuous expression, put as
  $$
  \left(\frac{\partial x_k}{\partial u_i} \right )^2 =\frac{1}{4} \frac{1}{\prod_{m \neq k} (a_k - a_m)}\frac{\prod_{m\neq i} (u_m+a_k)}{(u_i+a_k)},
  $$
  upon replacing 
  $$
  a_k = \alpha_k + \frac{k}{2}, \quad u_m =
  \begin{cases}
    n_m - \frac{m}{2},  &\text{for}~m\neq i,\\ 
    n_i - \frac{i}{2} + \frac{1}{2}, &\text{for}~m = i.
  \end{cases}
 $$
Therefore, the continuous identity  \eqref{eq: id squares} upon the above identification  implies
  \begin{equation*}
      \scalarprod{\Delta_i \q(\n)}{\Delta_i \q(\n + \tfrac{1}{2}\bef)}
      =  \frac{\prod_{m\neq i}(n_i - n_m + \frac{m-i}{2} + \frac{1}{2})}{4 \prod_{m=1}^N(n_i + \alpha_m - \frac{m-i}{2} + \frac{1}{2})}.  
  \end{equation*}
 Similarly, we find:
  \begin{equation*}
    \scalarprod{ \Delta_j \q(\n  - \be_j + \tfrac{1}{2}\bef) }{ \Delta_j \q(\n  - \be_j + \bef) }
    = \frac{\prod_{m\neq j}(n_j - n_m + \frac{m-j}{2} - \frac{1}{2})}{4 \prod_{m=1}^N(n_j + \alpha_m - \frac{m-j}{2})}.
  \end{equation*}
  The observation that the only factors in each of the latter two expressions which depends on both $n_i$ and $n_j$ are equal (up to sign) finishes the proof.
\end{proof}

\begin{remark}
  Similar to equations \eqref{eq:component-squares}, \eqref{eq:discrete-confocal-quadric-equation}
  it is possible to generalize Proposition \ref{prop: discrete factorization} 
  by replacing the vector $\bef$ by a  vector of signs $\bsigma$ with $\sigma_i=\sigma_j=1$ and all other components  components being arbitrary.
\end{remark}

\section{The case $N=2$}
\label{sect: 2d}

Here, and in the following section, we examine in greater detail the general theory set down in the preceding in the cases $N=2$ and $N=3$. For the benefit of the reader, these two sections are made as self-contained as possible.

\subsection{Continuous confocal coordinates}
Let $a > b > 0$. Then formulas
\begin{equation}
  \label{eq:confocal2d-map}
  \begin{aligned}
    x(u_1, u_2) &= \frac{\sqrt{u_1 + a}\sqrt{u_2 + a}}{\sqrt{a-b}},\\
    y(u_1, u_2) &= \frac{\sqrt{-(u_1 + b)}\sqrt{u_2 + b}}{\sqrt{a-b}},
  \end{aligned}
\end{equation}
define a parametrization of the first quadrant of $\R^2$ by confocal coordinates
\begin{equation*}
  \mathcal{U} = \set{ (u_1,u_2) \in \R^2 }{ -a < u_1 < -b < u_2 } \rightarrow \R_+^2.
\end{equation*}
A family of confocal conics is obtained by reflections in the coordinate axes.

\begin{figure}[H]
  \centering
  \includegraphics[width=0.49\textwidth]{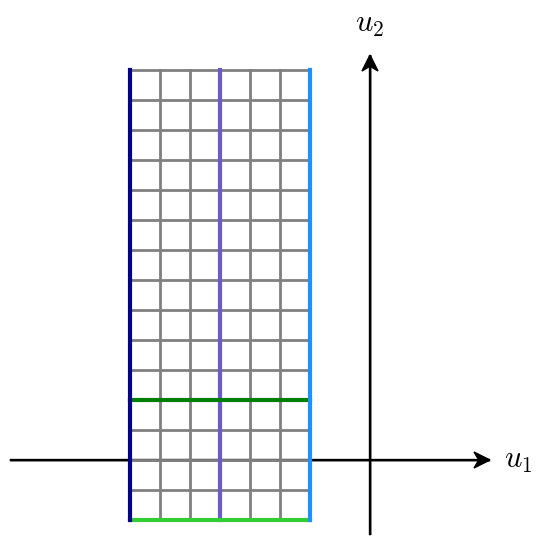}
  \includegraphics[width=0.49\textwidth]{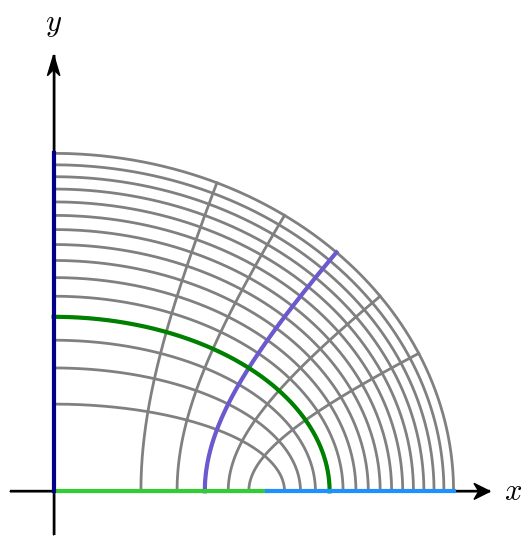}
  \caption{
    Square grid on the domain $\mathcal{U}$ and its image under the map \refeq{eq:confocal2d-map}.
    The horizontal lines $u_2=const$ are mapped to ellipses
    with the degenerate case $u_2 \searrow -b$, which is mapped to a line segment on the $x$-axis.
    The vertical lines $u_1 = const$ are mapped to hyperbolas
    with the degenerate cases $u_1 \nearrow -b$, which is mapped to a ray on the $x$-axis,
    and $u_2 \searrow -a$, which is mapped to the positive $y$-axis.
  }
\end{figure}

\subsection{Discrete confocal coordinates}
We start with the general formula 
\begin{equation*}
  \begin{aligned}
    x(n_1,n_2) &= D_1\sqr{n_1+\epsilon_1+c_1} \sqr{n_2+\epsilon_2+c_1} ,\as
    y(n_1,n_2) &=\textstyle D_2\sqr{-n_1-\epsilon_1-c_2+\frac{1}{2}}\sqr{n_2+\epsilon_2+c_2}
  \end{aligned}
\end{equation*}
for a separable solution of the discrete Euler-Poisson-Darboux system \refeq{eq:discrete-Euler-Darboux} with $\gamma=\frac{1}{2}$,
where a suitable choice of solutions \refeq{eq:discrete-separable-solution-version1}, 
\refeq{eq:discrete-separable-solution-version2} has already been made according to the continuous case. 
We use the above ansatz to illustrate the choice of the coordinate shifts $\epsilon_i$ and $c_k$ according to boundary conditions \refeq{eq:discrete-boundary-conditions1} and \refeq{eq:discrete-boundary-conditions2}.
For $\alpha, \beta \in \Z$ with $\alpha > \beta$, we define $c_1, c_2$ and $\epsilon_1, \epsilon_2$ such that
we obtain a map
\begin{equation*}
  \mathcal{U} = \set{ (n_1,n_2) \in \Z^2 }{ -\alpha \leq n_1 \leq -\beta \leq n_2 } \rightarrow \R_+^2\ ,
\end{equation*}
where the boundary components $n_1 = -\alpha$, $n_1 = -\beta$, and $n_2 = -\beta$
correspond to degenerate conics that lie on the coordinate axes:
\begin{align*}
  x |_{n_1 = -\alpha} &= 0 \quad \text{(degenerate hyperbola)},\\
  y |_{n_1 = -\beta} &= 0   \quad \text{(degenerate hyperbola)},\\
  y |_{n_2 = -\beta} &= 0   \quad \text{(degenerate ellipse)}.
\end{align*}
For this, the following linear system of equations has to be satisfied:
\begin{align*}
  \epsilon_1 + c_1 &= \alpha,\\
  \epsilon_1 + c_2 &= \beta + \frac{1}{2},\\
  \epsilon_2 + c_2&= \beta.
\end{align*}
As a consequence, we find:
\begin{equation*}
  \epsilon_2 + c_1= \alpha - \frac{1}{2}.
\end{equation*}
Thus, we end up with the formula
\bela{E5}
 \q(\bn) = \left(\bear{c} D_1\sqr{n_1 + \alpha}\sqr{n_2 + \alpha - \frac{1}{2}}\as
                               D_2\sqr{- n_1 - \beta}\sqr{n_2 + \beta}\ear\right).
\ela
Up to scaling along the coordinate axes, the latter defines discrete confocal coordinates on the first quadrant of $\R^2$, if the domain $\mathcal U$ is extended to $\mathcal U\cup\, \mathcal{U}^*$ as  demonstrated below.
From this we generate a family of discrete confocal conics by reflections in the coordinate axes.

\begin{figure}[H]
  \centering
  \includegraphics[width=0.49\textwidth]{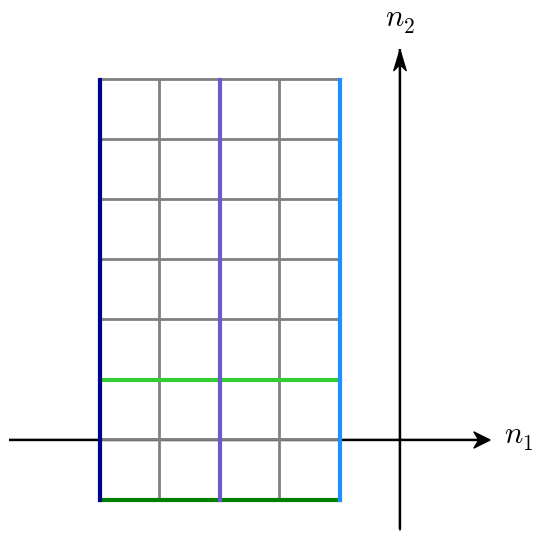}
  \includegraphics[width=0.49\textwidth]{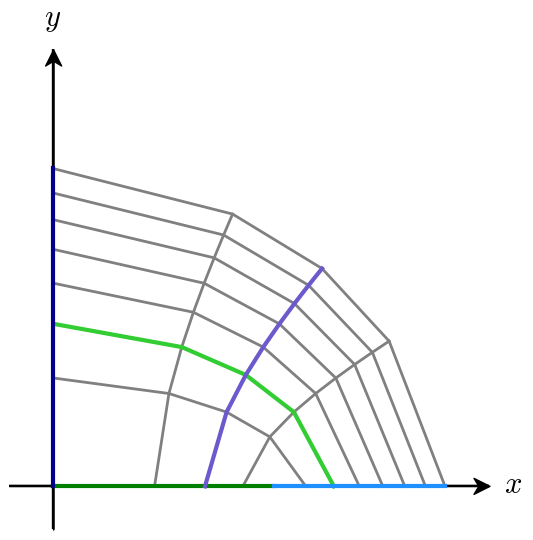}
  \caption{
    Points of the square grid on the domain $\mathcal{U}$ and their images under the map \refeq{E5}, 
    joined by straight line segments respectively.
    The horizontal lines $n_2={\rm const}$ are mapped to discrete ellipses
    with the degenerate case $n_2 = -\beta$, which is mapped to a line segment on the $x$-axis.
    The vertical lines $n_1 = {\rm const}$ are mapped to discrete hyperbolas
    with the degenerate cases $n_1 = -\beta$, which is mapped to a ray on the $x$-axis,
    and $n_1 = -\alpha$, which is mapped to the positive $y$-axis.
  }
\end{figure}

In order to implement the orthogonality condition, we extend $\q$ to $\mathcal U^*$, and compute the discrete derivatives of the extension of $\q$ along the dual edges of the two dual square lattices $\mathcal{U}$ and $\mathcal{U}^*$. Formulas \eqref{E3} for the ``discrete derivatives'' of the discrete square root immediately lead to
\bela{E7}
  \Delta_1\q(\bn) = \frac{1}{2}\left(\bear{c} 
                              \dis D_1\frac{\sqr{n_2 + \alpha - \frac{1}{2}}}{\sqr{n_1 + \alpha + \frac{1}{2}}}\AAS
                               \dis-D_2\frac{\sqr{n_2 + \beta}}{\sqr{- n_1 - \beta - \frac{1}{2}}}\ear\right)
\ela
and
\bela{E8}
  \Delta_2\q(\bn) = \frac{1}{2}\left(\bear{c} \dis D_1\frac{\sqr{n_1 + \alpha}}{\sqr{n_2 + \alpha}}\AAS
                               \dis D_2\frac{\sqr{ - n_1 - \beta}}{\sqr{n_2 + \beta + \frac{1}{2}}}\ear\right).
\ela
If we introduce the notation
\bela{E8a}
  \textstyle
  \hn{\sigma_1,\sigma_2} = \bn + \frac{1}{2}(\sigma_1,\sigma_2), \quad \sigma_i=\pm 1,
\ela
then it turns out that 
\bela{E9}
  \textstyle\left\langle \Delta_1\q(\bn), \Delta_2\q(\hn{+-}) \right\rangle= \frac{1}{4}(D_1^2 - D_2^2),
\ela
so that dual edges are orthogonal if and only if
\bela{E10}
  D_1^2 = D_2^2.
\ela
We make the choice
\bela{E23b}
  D_1^2 = D_2^2 = \frac{1}{\alpha - \beta - \frac{1}{2}}.
\ela
Formulas \eqref{E5} with the constants \eqref{E23b} constitute a discretization of the parametrization \eqref{eq:confocal2d-map}. 

\begin{figure}[H]
  \centering
  \includegraphics[width=0.49\textwidth]{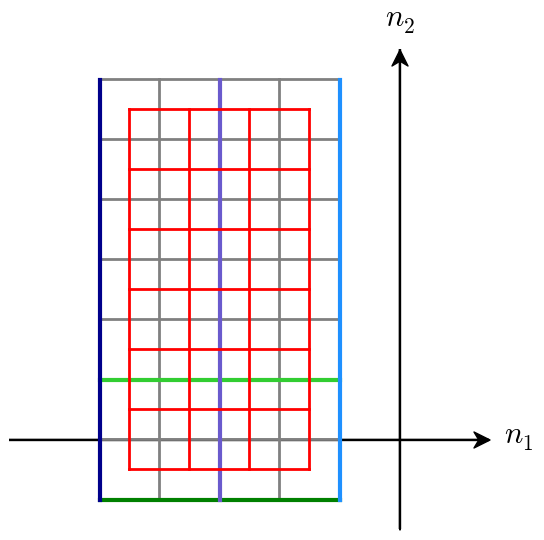}
  \includegraphics[width=0.49\textwidth]{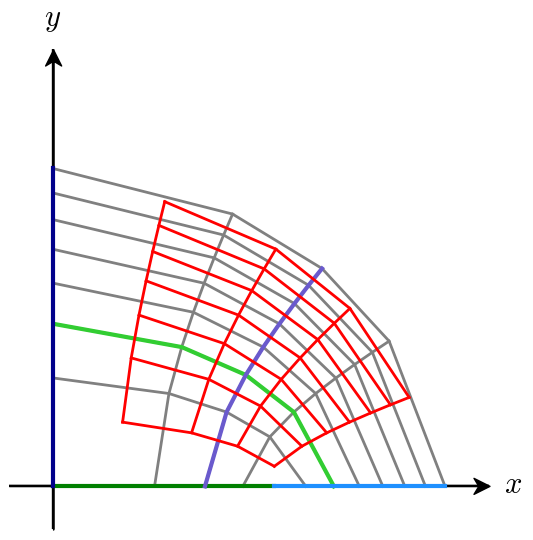}
  \caption{Points of the square grid on the domain $\mathcal{U} \cup \mathcal{U}^*$ and their images under the map \refeq{E5}.
    All pairs of corresponding dual edges are mutually orthogonal.}
\end{figure}

It is readily verified that with the choice \eqref{E23b}, a lattice point $\q(\bn)$ and its nearest neighbours $\q(\hn{++})$ and $\q(\hn{+-})$ are related by
\bela{E24b}
  \begin{aligned}
   x(\n)x(\hn{++}) & = \dis\frac{(n_1 + \alpha)(n_2 + \alpha - \frac{1}{2})}{\alpha-\beta-\frac{1}{2}},\as
   y(\n)y(\hn{++}) & = \dis\frac{(n_1 + \beta + \frac{1}{2})(n_2 + \beta)}{\beta-\alpha+\frac{1}{2}},  
  \end{aligned}
\ela
respectively by
\bela{E24c}
  \begin{aligned}
   x(\n)x(\hn{+-}) & = \dis\frac{(n_1 + \alpha)(n_2 + \alpha - 1)}{\alpha-\beta-\frac{1}{2}},\as
   y(\n)y(\hn{+-}) & = \dis\frac{(n_1 + \beta + \frac{1}{2})(n_2 + \beta -\frac{1}{2})}{\beta-\alpha+\frac{1}{2}},
  \end{aligned}
\ela
which are natural discretizations of the formulas 
\bela{E24cc}
  x^2 = \frac{(u_1 + a)(u_2 + a)}{a-b},\quad y^2 = \frac{(u_1 + b)(u_2 + b)}{b-a}
\ela
for the squares of coordinates. From \eqref{E24b}, \eqref{E24c} one easily derives
\bela{E23}
  \begin{aligned}  
   \frac{x(\bn)x(\hn{++})}{n_1 + \alpha} + \frac{y(\bn)y(\hn{++})}{n_1 + \beta + \frac{1}{2}} &= 1, \as
   \frac{x(\bn)x(\hn{++})}{n_2 + \alpha - \frac{1}{2}} + \frac{y(\bn)y(\hn{++})}{n_2 + \beta} &= 1,
  \end{aligned}
\ela
and
\bela{E23a}
  \begin{aligned}
   \frac{x(\n)x(\hn{+-})}{n_1 + \alpha} + \frac{y(\bn)y(\hn{+-})}{n_1 + \beta + \frac{1}{2}} & = 1,\as
   \frac{x(\n)x(\hn{+-})}{n_2 + \alpha - 1} + \frac{y(\bn)y(\hn{+-})}{n_2 + \beta - \frac{1}{2}} & = 1,
  \end{aligned}
\ela
which can be considered as discretizations of the defining  equations of the two confocal conics through the point $(x,y)\in\mathbb R^2$:
\bela{E23c}
  \begin{aligned}
   \frac{x^2}{u_1+a} + \frac{y^2}{u_1+b} & = 1, \as
   \frac{x^2}{u_2+a} + \frac{y^2}{u_2+b} & = 1.
  \end{aligned}
\ela

Observe that relations (\ref{E24b}) and (\ref{E24c}) may be regarded as two maps 
\bela{E24a}
  \tau^{\mbox{\tiny $++$}}:\q(\bn) \mapsto \q(\hn{++}),\quad\tau^{\mbox{\tiny $+-$}}: \q(n)\mapsto\q(\hn{+-}),
\ela
whose commutativity $\tau^{\mbox{\tiny $++$}}\circ\tau^{\mbox{\tiny $+-$}}= \tau^{\mbox{\tiny $+-$}}\circ\tau^{\mbox{\tiny $++$}}$ is directly verified. Thus, the net $\q$ can be uniquely determined from its value at a single vertex. 

Proposition \ref{prop: discrete factorization} in the case $N=2$ can be shown by simple calculations starting either with
the explicit parametrization (\ref{E5}) or the maps (\ref{E24b}), \eqref{E24c}.
For instance, a factorization property associated with  $\tau^{++}$ (shift by $\frac{1}{2}\bef$) reads:
\bela{E24ccc}
  \frac{\left\langle\Delta_1\q(\bn), \Delta_1\q(\bn+\frac{1}{2}\bef)\right\rangle}
  {\left\langle\Delta_2\q(\bn-\be_2+\frac{1}{2}\bef),\Delta_2\q(\bn-\be_2+\bef)\right\rangle} = \frac{\phi_1(n_1)}{\phi_2(n_2)},
\ela
where
\bela{E24ccca}
\frac{\phi_1(n_1)}{\phi_2(n_2)} = -\frac{(n_2+\alpha-\frac{1}{2})(n_2+\beta)}{(n_1+\alpha+\frac{1}{2})(n_1+\beta+1)},
\ela
and a similar property is associated with the map $\tau^{\mbox{\tiny $+-$}}$. This can be seen as a discretization of the isothermicity property of the system of confocal conics which reads
\bela{E24cccc}
  \frac{|\partial \q/\partial u_1|^2}{|\partial \q/\partial u_2|^2} = \frac{\alpha_1(u_1)}{\alpha_2(u_2)},
\ela
where
\bela{24ccccc}
 \frac{\alpha_1(u_1)}{\alpha_2(u_2)} = -\frac{(u_2+a)(u_2+b)}{(u_1+a)(u_1+b)}.
\ela
The combinatorics of the factorization property \eqref{E24ccc} is illustrated in \linebreak Figure \ref{fig:isothermic-2d}.

\begin{figure}[H]
  \centering
  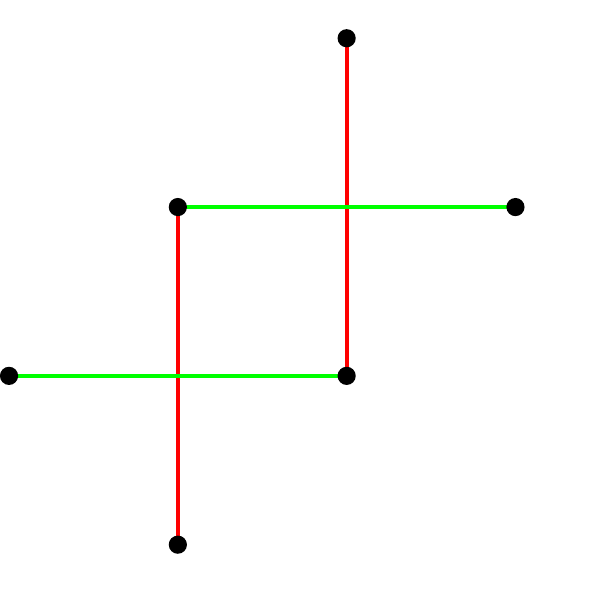
  \caption{
    Combinatorics of the factorization property \eqref{E24ccc}.}
  \label{fig:isothermic-2d}
\end{figure}

\section{The case $N=3$}
\label{sect: 3d}

\subsection{Continuous confocal coordinates}
Let $a > b > c > 0$. Then formulas
\begin{align}\label{eq: param cont 3d}
  x(u_1, u_2, u_3) &= \frac{\sqrt{u_1+a}\sqrt{u_2+a}\sqrt{u_3+a}}{\sqrt{a-b}\sqrt{a-c}},\nonumber\\
  y(u_1, u_2, u_3) &= \frac{\sqrt{-(u_1+b)}\sqrt{u_2+b}\sqrt{u_3+b}}{\sqrt{a-b}\sqrt{b-c}},\\
  z(u_1, u_2, u_3) &= \frac{\sqrt{-(u_1+c)}\sqrt{-(u_2+c)}\sqrt{u_3+c}}{\sqrt{a-c}\sqrt{b-c}}\nonumber
\end{align}
define a parametrization of the first octant of $\R^3$ by confocal coordinates,
\begin{equation*}
  \mathcal{U} = \set{ (u_1,u_2,u_3) }{ -a < u_1 < -b < u_2 < -c < u_3 } \rightarrow \R_+^3\ .
\end{equation*}
Confocal quadrics are obtained by reflections of the coordinate surfaces (corresponding to $u_i={\rm const}$ for $i=1,2$ or 3) in the coordinate planes of $\R^3$, see Figure \ref{fig:confocal3d}, left.
\begin{itemize}
\item The planes $u_3 = {\rm const}$ are mapped to ellipsoids.
  In the degenerate case $u_3 \searrow -c$ one has $z = 0$,
  while $x(u_1,u_2)$ and $y(u_1,u_2)$ exactly recover the two-dimensional case \refeq{eq:confocal2d-map}
  on the interior of an ellipse given by $u_2 \nearrow -c$.
\item The planes $u_2 = {\rm const}$ are mapped to one-sheeted hyperboloids
  with the two degenerate cases corresponding to $u_2 \nearrow -c$ and $u_2 \searrow -b$.
\item The planes $u_1 = {\rm const}$ are mapped to two-sheeted hyperboloids
  with the two degenerate cases corresponding to $u_1 \nearrow -b$ and $u_1 \searrow -a$.
\end{itemize}

\subsection{Discrete confocal quadrics}
Let $\alpha, \beta, \gamma \in \Z$ with $\alpha > \beta > \gamma$. Then the formula
\bela{E15}
 \q(\bn) = \left(\bear{c} D_1\sqr{n_1 + \alpha}\sqr{n_2 + \alpha - \frac{1}{2}}\sqr{n_3 + \alpha - 1}\as
                               D_2\sqr{- n_1 - \beta}\sqr{n_2 + \beta}\sqr{n_3 + \beta - \frac{1}{2}}\as
                               D_3\sqr{- n_1 - \gamma - \frac{1}{2}}\sqr{- n_2 - \gamma}\sqr{n_3 + \gamma}\ear\right)
\ela
with $\q(\n) = (x(\n),y(\n),z(\n))$ defines a discrete net in the first octant of $\R^3$ (discrete confocal coordinate system), that is, a map
\begin{equation*}
  \mathcal{U} = \set{ (n_1,n_2,n_3) \in \Z^3 }{ -\alpha \leq n_1 \leq -\beta \leq n_2 \leq -\gamma \leq n_3 } \rightarrow \R_+^3
\end{equation*}
which is a separable solution of (dEPD$_{1/2}$). If this net is extended to $\mathcal U \cup\, \mathcal U^*$ then discrete confocal quadrics are obtained by reflections of the coordinate surfaces ($n_i={\rm const}$ for $i=1,2$ or 3) in the coordinate planes of $\R^3$, see Figure \ref{fig:confocal3d}, right, provided that the constants $D_k$ are chosen in the manner described below.
The five boundary components $n_1 = -\alpha$, $n_1 = -\beta$, $n_2 = -\beta$, $n_2= -\gamma$, and $n_3 = -\gamma$
are mapped to degenerate quadrics that lie in the coordinate planes of $\R^3$.

One computes the discrete derivatives with the help of formulas \eqref{E3}:
\bela{E17}
  \Delta_1\q(\bn) = \frac{1}{2}\left(\bear{c} 
             \dis D_1\frac{\sqr{n_2 + \alpha - \frac{1}{2}}\sqr{n_3 + \alpha - 1}}{\sqr{n_1 + \alpha + \frac{1}{2}}}\AAS
             \dis-D_2\frac{\sqr{n_2 + \beta}\sqr{n_3 + \beta - \frac{1}{2}}}{\sqr{- n_1 - \beta - \frac{1}{2}}}\AAS
	        \dis -D_3\frac{\sqr{- n_2 - \gamma}\sqr{n_3 + \gamma}}{\sqr{- n_1 - \gamma - 1}}\ear\right),
\ela
\bela{E18}
  \Delta_2\q(\bn) = \frac{1}{2}\left(\bear{c} 
           \dis D_1\frac{\sqr{n_1 + \alpha}\sqr{n_3 + \alpha - 1}}{\sqr{n_2 + \alpha}}\AAS
           \dis D_2\frac{\sqr{ - n_1 - \beta}\sqr{n_3 + \beta - \frac{1}{2}}}{\sqr{n_2 + \beta + \frac{1}{2}}}\AAS
           \dis -D_3\frac{\sqr{- n_1 - \gamma - \frac{1}{2}}\sqr{n_3 + \gamma}}{\sqr{- n_2 - \gamma - \frac{1}{2}}}
\ear\right) 
\ela
and
\bela{E18a}
  \Delta_3\q(\bn) = \frac{1}{2}\left(\bear{c} 
           \dis D_1\frac{\sqr{n_1 + \alpha}\sqr{n_2 + \alpha - \frac{1}{2}}}{\sqr{n_3 + \alpha - \frac{1}{2}}}\AAS
           \dis D_2\frac{\sqr{- n_1 - \beta}\sqr{n_2 + \beta}}{\sqr{n_3 + \beta}}\AAS
           \dis D_3\frac{\sqr{- n_1 - \gamma - \frac{1}{2}}\sqr{- n_2 - \gamma}}{\sqr{n_3 + \gamma + \frac{1}{2}}}\ear\right).
\ela
In accordance with the general orthogonality condition, we now demand that dual pairs of edges and faces of the nets $\q(\mathcal{U})$ and $\q(\mathcal{U}^*)$ be orthogonal, so that
\bela{E19}
  \begin{aligned}
   \langle\Delta_1\q(\bn),\Delta_2\q(\bn-\be_2+\tfrac{1}{2}\bef)\rangle & = 0,\\
   \langle\Delta_1\q(\bn),\Delta_3\q(\bn-\be_3+\tfrac{1}{2}\bef)\rangle & = 0,\\
   \langle\Delta_2\q(\bn),\Delta_3\q(\bn-\be_3+\tfrac{1}{2}\bef)\rangle & = 0.
  \end{aligned}
\ela
Evaluation of the above conditions leads to
\bela{E20}
  \begin{aligned}
   \textstyle D_1^2(n_3 +  a - \frac{3}{2}) - D_2^2(n_3 + b - \frac{3}{2}) + D_3^2(n_3 + c - \frac{3}{2}) & = 0,\\
   \textstyle D_1^2(n_2 + a - 1) - D_2^2(n_2 + b - 1) + D_3^2(n_2 + c - 1) & = 0,\\
   \textstyle D_1^2(n_1 + a - \frac{1}{2}) - D_2^2(n_1 + b - \frac{1}{2}) + D_3^2(n_1 + c - \frac{1}{2}) & = 0,
  \end{aligned}
\ela
where 
\bela{Z7}
 \textstyle
  a = \alpha + \frac{1}{2},\quad b = \beta+1,\quad c = \gamma+\frac{3}{2}.
\ela
These are, {\sl mutatis mutandis}, identical with their classical continuous counterparts as demonstrated in connection with the general case analyzed in Section \ref{sect: discrete quadrics}. Since the coefficients $D_i$ are independent of, for instance, $n_3$, the first condition in (\ref{E20}) splits into the pair
\bela{E21}
  \begin{aligned}
   D_1^2 - D_2^2 + D_3^2 & =  0,\\ 
   D_1^2 a - D_2^2 b + D_3^2 c & = 0,
  \end{aligned}
\ela
and it is evident that the remaining two conditions constitute linear combinations thereof. Accordingly, the orthogonality requirement leads to the unique relative scaling
\bela{E22}
  \frac{D_1^2}{b-c} = \frac{D_2^2}{a-c} = \frac{D_3^2}{a-b} =: D^2.
\ela

\begin{figure}[H]
  \centering
  \includegraphics[clip, trim={380 200 380 200}, width=0.8\textwidth]{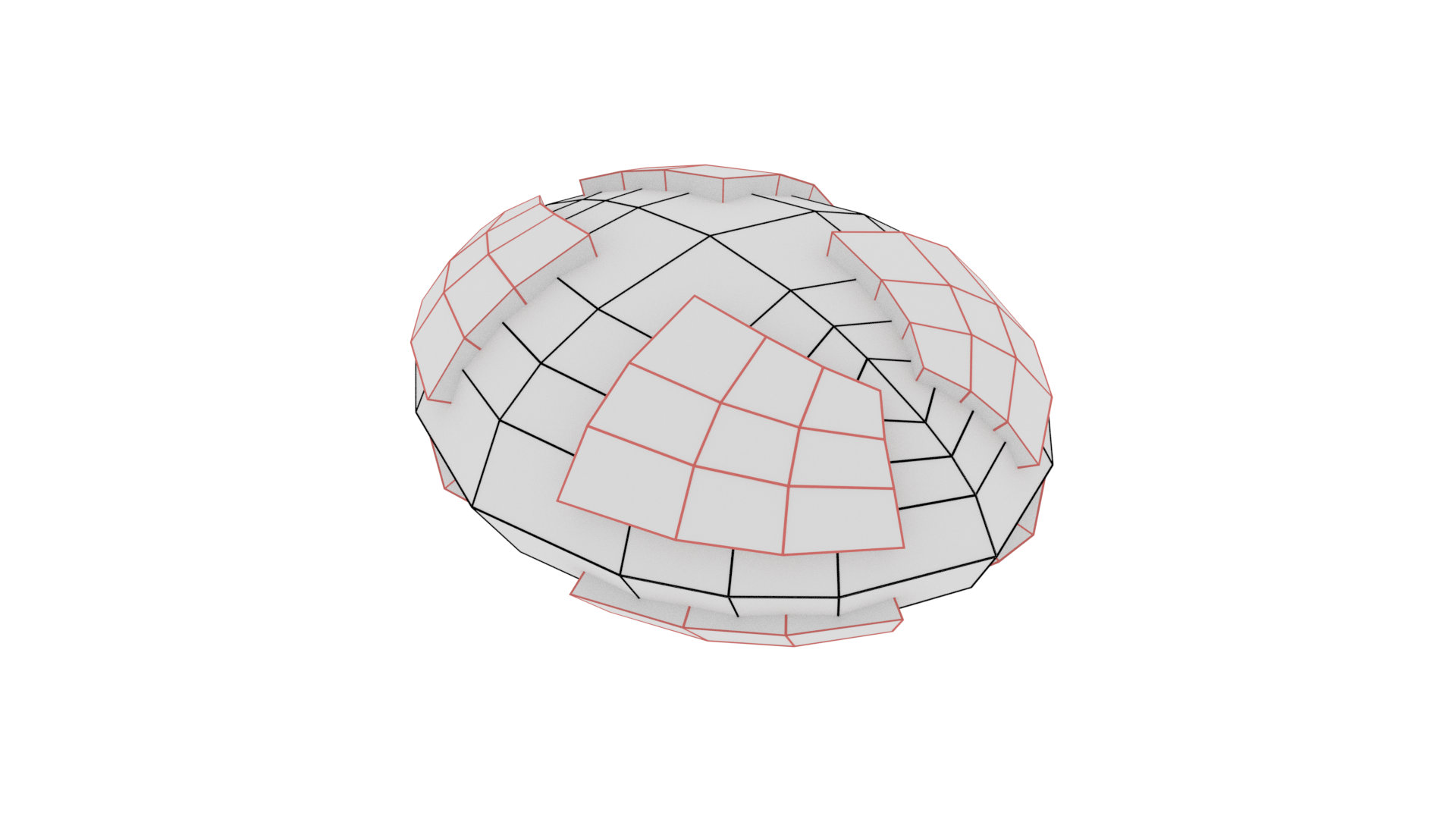}
    \caption{
    A discrete ellipsoid ($n_1, n_2$ integer, $n_3 = {\rm const}$) from the system \refeq{E15}
    and its two adjacent layers from the dual n ($n_1, n_2$ half-integer, $n_3 = {\rm const} \pm \frac{1}{2}$).
    All faces are planar and orthogonal to the corresponding edges of the other net.}
\end{figure}

\begin{remark}
The curvature lines of a smooth surface are characterized by the following properties: they form a conjugate net, and along each curvature line two infinitesimally close normals intersect. In the case of discrete confocal coordinates  the edges of the dual net $\q(\mathcal U^*)$ can be interpreted as normals to the faces of the net $\q(\mathcal U)$.  Since both nets have planar faces, any two neighboring normals intersect. Thus, extended edges of $\q(\mathcal U^*)$ constitute a \emph{discrete line congruence} normal to the faces of the Q-net (discrete conjugate net) $\q(\mathcal U)$ (cf. \cite{bobenko-suris} for the notion of a discrete line congruence).
  \end{remark}

The bilinear relations between a lattice point $\q(\bn)$ and its nearest neighbours $\q(\bn+\frac{1}{2}\bsigma)$ may be formulated as follows:
\bela{E23d}
  \begin{aligned}
\dis\frac{x(\bn)x(\bn+\frac{1}{2}\bsigma)}{u+a} + \frac{y(\bn)y(\bn+\frac{1}{2}\bsigma)}{u+b} 
   + \frac{z(\bn)z(\bn+\frac{1}{2}\bsigma)}{u+c} & = 1,\as
\dis\frac{x(\bn)x(\bn+\frac{1}{2}\bsigma)}{v+a} + \frac{y(\bn)y(\bn+\frac{1}{2}\bsigma)}{v+b} 
   + \frac{z(\bn)z(\bn+\frac{1}{2}\bsigma)}{v+c} & = 1, \as
\dis\frac{x(\bn)x(\bn+\frac{1}{2}\bsigma)}{w+a} + \frac{y(\bn)y(\bn+\frac{1}{2}\bsigma)}{w+b} 
   + \frac{z(\bn)z(\bn+\frac{1}{2}\bsigma)}{w+c} & = 1,   
  \end{aligned}
\ela
provided that
\bela{E23e}
  D^2 = \frac{1}{(a-b)(a-c)(b-c)},
\ela
and
\bela{Z8}
u=n_1+\tfrac{1}{4}\sigma_1-\tfrac{3}{4}, \quad v=n_2+\tfrac{1}{4}\sigma_2-\tfrac{5}{4}, \quad w=n_3+\tfrac{1}{4}\sigma_3-\tfrac{7}{4}.
\ela

\subsection{Discrete umbilics and discrete focal conics}

An interesting feature of discrete confocal quadrics which is not present in the two-dimensional case is obtained by considering the ``discrete umbilics'' (that is, vertices of valence different from 4) of the discrete ellipsoids $n_3 =\mbox{const}$ and the discrete two-sheeted hyperboloids $n_1=\mbox{const}$. In the case of the discrete ellipsoids, these have valence 2 and are located at $n_1=n_2=-\beta$ so that (\ref{E15}) reduces to
the planar discrete curve
\bela{G1}
  \q(n_3) = \left(\bear{c} D_1(\alpha-\beta-\frac{1}{2})\sqr{n_3 + \alpha - 1}\as
                               0\\
                               D_3(\beta - \gamma - \frac{1}{2})\sqr{n_3 + \gamma}\ear\right).
\ela
Once again, it turns out convenient to extend the domain of this one-dimensional lattice to the appropriate subset of $\Z\cup\Z^*$ so that
\bela{G2}
  \frac{x(n_3)x(n_3+\frac{1}{2})}{\alpha-\beta-\frac{1}{2}} - \frac{z(n_3)z(n_3+\frac{1}{2})}{\beta-\gamma-\frac{1}{2}} = 1.
\ela
The latter constitutes a discretization of the focal hyperbola \cite{sommerville}
\bela{G3}
  \frac{x^2}{a-b} - \frac{z^2}{b-c} = 1
\ela
which is known to be the locus of the umbilical points of confocal ellipsoids. Similarly, evaluation of (\ref{E15}) at $n_2=n_3=-\gamma$ produces the planar discrete curve
\bela{G4}
   \q(n_1) = \left(\bear{c} D_1(\alpha - \gamma - 1)\sqr{n_1 + \alpha}\as
                               D_2(\beta - \gamma - \frac{1}{2})\sqr{-n_1 - \beta}\as
                               0
  \ear\right)
\ela
which consists of the discrete umbilics of the discrete two-sheeted hyperboloids. Extension to half-integers yields
\bela{G5}
  \frac{x(n_1)x(n_1+\frac{1}{2})}{\alpha-\gamma-1} + \frac{y(n_1)y(n_1+\frac{1}{2})}{\beta-\gamma-\frac{1}{2}} = 1,
\ela
which reproduces, in the formal continuum limit, the classical focal ellipse
\bela{G6}
  \frac{x^2}{a-c} + \frac{y^2}{b-c} = 1
\ela
as the locus of the umbilical points of confocal two-sheeted hyperboloids.

\appendix
\section{Incircular nets as orthogonal Koenigs nets}
\label{sect: incircular}

A geometric discretization of confocal conics as incircular nets (IC-nets) was recently suggested in \cite{Akopyan-Bobenko}.
This version of discrete confocal conics is given via a simple local geometric condition: one considers a congruence of straight lines with the combinatorics of the square grid such that all the quadrilaterals formed by neighboring lines possess inscribed circles. In this appendix we show that, surprisingly, IC-nets share two crucial properties with discrete confocal coordinates introduced in the present paper, namely the Koenigs property and the orthogonality in the sense of Definition \ref{def: d ortho}. One should mention however that IC-nets are not separable, therefore they do not constitute a special case of discrete confocal conics as defined in Defintion \ref{def:discrete-elliptic-coordinates}.

\begin{definition}
  A discrete net $f : \Z^2 \supset U \rightarrow \R^2$ is called an \emph{incircular net (IC-net)} if
  \begin{enumerate}
  \item The points $f_{ij}$ with $i={\rm const}$, respectively $j={\rm const}$, lie on straight lines,
    preserving the order.
  \item Every elementary quadrilateral $(f_{ij}, f_{i+1, j}, f_{i+1, j+1}, f_{i, j+1})$ has an incircle.
  \end{enumerate}
\end{definition}

All lines of an IC-net touch some conic $\alpha$, while all vertices of one diagonal $i + j = {\rm const}$, resp. $i - j = {\rm const}$, lie on a conic confocal to $\alpha$.

Denote the incenter of the quadrilateral $(f_{ij}, f_{i+1, j}, f_{i+1, j+1}, f_{i, j+1})$ by $\omega_{ij}$.  So, $\omega : U \rightarrow \R^2$ is the net of incenters of $f$. Note that $\omega$ also possesses property (i). Denote the two dual subnets of $\omega$, corresponding to $(i,j)$ with $2k \coloneqq i+j$ and $2l \coloneqq j-i$ even, respectively odd, by $\eta$ and $\tilde{\eta}$:
\[
  \eta_{kl} \coloneqq \omega_{k-l, k+l},  \; (k,l) \in \Z^2 \quad {\rm and}\quad 
  \tilde{\eta}_{kl} \coloneqq \omega_{k-l, k+l}, \; (k,l) \in {(\Z^2)}^*.
\]
In Figure \ref{fig:icnet-centers-with-conics}, the edges of the nets $\eta$ and $\tilde\eta$ are shown. The intersection points of dual pairs of edges happen to be points of the underlying IC-net $f$. At each such point, the intersecting edges of $\eta$ and of $\tilde\eta$ are tangent to the confocal conics mentioned above (the conics through $f_{ij}$ with $i+j=2k={\rm const}$, resp. with $j-i=2l={\rm const}$).  Therefore, the dual pairs of edges are orthogonal. We show that these nets also possess the Koenigs property and collect their important properties in the following theorem.

\begin{figure}[H]
\begin{center}
  \includegraphics[width=0.8\textwidth]{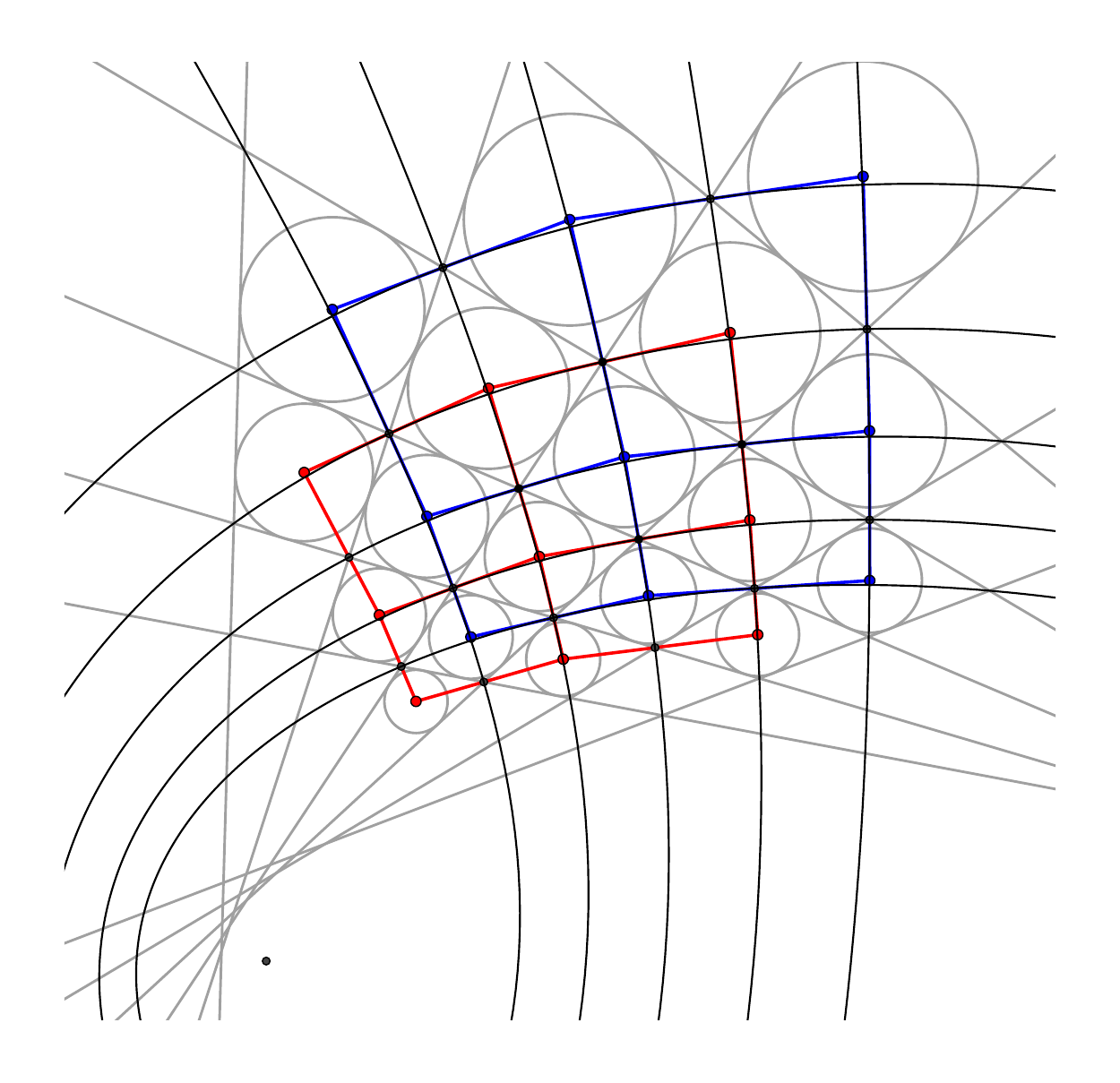}
\end{center}  
  \caption{
    Two dual subnets $\eta$ and $\tilde\eta$ of the net of incenters of an IC-net.
    The edges are tangent to confocal conics, and corresponding edges of the two nets are orthogonal.
  }
  \label{fig:icnet-centers-with-conics}
\end{figure}

\begin{theorem} \label{thm:icnet-centers-discrete-confocal-conics}
  For the two dual subnets $\eta$ and $\tilde{\eta}$ of the incenter-net of an IC-net:
\begin{enumerate}
  \item[{\rm (i)}] the edges are tangent to confocal conics, the points of tangency being the points of the IC-net;
  \item[{\rm (ii)}] each subnet consists of intersection points of diagonals of elementary quadrilaterals of the other subnet;
  \item[{\rm (iii)}] both subnets are circular-conical (that is, opposite angles sum up to $\pi$ in each quadrilateral and at each vertex-star);
  \item[{\rm (iv)}] each pair of dual edges intersects orthogonally;
  \item[{\rm (v)}] both subnets are Koenigs nets.
\end{enumerate}  
\end{theorem}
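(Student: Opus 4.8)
The plan is to treat (i) and (iv) as essentially established by the discussion preceding the theorem and to concentrate on (ii), (iii) and, above all, (v). Throughout I write $\omega$ for the net of incenters, and I use that $\omega$ inherits property (i): for a fixed first index $i$ the incenters $\omega_{ij}$ are all equidistant from the two lines $i,i+1$ and hence lie on their angle bisector, and symmetrically for a fixed second index. For (i) I would only recall the mechanism: $\omega_{i,j}$ and $\omega_{i+1,j+1}$ are incenters of the two IC-quadrilaterals meeting at the single vertex $f_{i+1,j+1}$, so both are equidistant from the two lines crossing there and thus lie on a common bisector through $f_{i+1,j+1}$; by the classical bisector property of confocal conics this bisector is tangent at $f_{i+1,j+1}$ to the confocal conic through that IC-point, which is (i). At each IC-point the two bisectors of the crossing lines are mutually orthogonal and are tangent to the two confocal conics through that point, which themselves meet orthogonally; since the dual pair of edges meeting at an IC-point consists of one bisector from $\eta$ and one from $\tilde\eta$ (a parity count), (iv) follows.

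For (ii) I would take an elementary quadrilateral of $\tilde\eta$, which is a ``diamond'' $\omega_{i,j},\omega_{i+1,j+1},\omega_{i,j+2},\omega_{i-1,j+1}$ in the incenter net. Its two diagonals join incenters sharing a common first index $i$, respectively a common second index $j+1$, so by property (i) of $\omega$ each diagonal lies on the corresponding bisector line. The incenter $\omega_{i,j+1}$ lies on both of these bisectors and is therefore the intersection point of the diagonals; a parity count shows $\omega_{i,j+1}$ is a vertex of $\eta$. This proves (ii), and symmetrically with the roles of $\eta$ and $\tilde\eta$ exchanged.

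For (iii) the conical condition is the clean part. At an interior vertex $\omega_{p,q}$ of $\eta$ the four emanating edges pass through the four corners of the single IC-quadrilateral $(p,q)$, of which $\omega_{p,q}$ is the incenter; the four angles of the vertex-star are thus the angles subtended at the incenter by the four sides of a tangential quadrilateral. Using the elementary identity that a side $V_kV_{k+1}$ is subtended at the incenter under the angle $\pi-\tfrac12(\gamma_k+\gamma_{k+1})$, where the $\gamma_k$ are the vertex angles (summing to $2\pi$), opposite subtended angles add up to $2\pi-\tfrac12\cdot 2\pi=\pi$, which is precisely the conical condition. The circular condition I would reduce, via (i), to a statement purely about confocal conics: the four edges of an $\eta$-quadrilateral are the tangents to the confocal family at the four corners of one IC-quadrilateral (two tangents to conics of each of the two confocal subfamilies), and concyclicity of their consecutive intersection points is then an incidence to be verified from the reflection and orthogonality properties of confocal conics, or by a direct computation in confocal coordinates.

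The heart of the matter is (v), and this is where I expect the real work. I would use the criterion developed in Section \ref{sect: dKoenigs}: a planar Q-net is a Koenigs net precisely when there is a positive vertex function $\nu$ for which the diagonal intersection point $M$ of each elementary quadrilateral divides its diagonals in the ratios $\overrightarrow{\q_{(i)}M}:\overrightarrow{M\q_{(j)}}=\nu_{(i)}:\nu_{(j)}$ and $\overrightarrow{\q M}:\overrightarrow{M\q_{(ij)}}=\nu:\nu_{(ij)}$, equivalently when the compatibility relation \eqref{Z3} holds. By (ii) the point $M$ for an $\eta$-quadrilateral is already identified: it is the incenter of the central IC-quadrilateral, i.e.\ the corresponding vertex of $\tilde\eta$, and it lies on the two bisector lines carrying the diagonals. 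The plan is to compute the two division ratios as ratios of the signed distances between consecutive incenters measured along these bisector lines, to express them through the incircle radii and tangent lengths of the IC-quadrilaterals, and then to exhibit a single positive function $\nu$ — presumably a simple function of the incircle data — satisfying both ratio conditions simultaneously; equivalently, to verify \eqref{Z3} for the associated Darboux coefficients. Establishing this consistency is the main obstacle: the two ratio conditions must close up around every vertex, and making the bookkeeping of incircle radii and bisector angles telescope into one well-defined $\nu$ is the crux. The same computation, with $\eta$ and $\tilde\eta$ interchanged, then yields the Koenigs property for both subnets.
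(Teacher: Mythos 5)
Your treatment of (i), (ii) and (iv) matches the paper's in substance (the paper simply cites \cite{Akopyan-Bobenko} for (i) and notes that (ii) holds for the dual subnets of any net consisting of straight lines), and your tangential-quadrilateral computation of the angles subtended at the incenter is a nice self-contained argument for the \emph{conical} half of (iii). But in two places you have only identified what must be shown without showing it, and one of them is the core of the theorem. For (iii), the \emph{circularity} of the subnets is left as ``an incidence to be verified''; the paper does not verify it either, but attributes it to \cite{Akopyan-Bobenko} by recognizing each dual subnet as the incenter net of a checkerboard IC-net, which is proved circular-conical there. If you do not invoke that result, you owe a proof.

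The genuine gap is (v). You correctly locate the diagonal intersection point of each $\eta$-quadrilateral at the corresponding vertex of $\tilde\eta$ and correctly state the ratio criterion, but your argument stops exactly where the work begins (``establishing this consistency is the main obstacle\dots the crux''). The paper closes this by switching to the equivalent multi-ratio characterization of Koenigs nets \cite[p.~52]{bobenko-suris}: around each vertex $\eta$ one must show
\begin{equation*}
  \multiratio{\eta_{(1)}}{\tilde{\eta}_{(12)}}{\eta_{(2)}}{\tilde{\eta}_{(2)}}{\eta_{(-1)}}{\tilde{\eta}}{\eta_{(-2)}}{\tilde{\eta}_{(1)}} = 1.
\end{equation*}
Each ratio of collinear segments is rewritten as a ratio of areas of two triangles with common apex $\eta$; by the orthogonality of dual edges already established in (iv), the height of such a triangle over the base $[\eta,\eta_{(1)}]$ equals the distance $|f_{(1)}\tilde{\eta}_{(12)}|$ measured along the dual edge from the IC-net point $f_{(1)}$ where the dual edges cross. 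In the cyclic product the edge lengths $|\eta\,\eta_{(i)}|$ cancel, and what remains is the reciprocal of a multi-ratio formed from the points $f$ and $\tilde\eta$, which equals $1$ because the triangles $(\tilde{\eta}, f, f_{(2)})$ and $(\tilde{\eta}_{(12)}, f_{(1)}, f_{(12)})$ are perspective (Menelaus condition for the Desargues configuration). No explicit function $\nu$ and no bookkeeping of incircle radii is required; orthogonality is precisely the ingredient that makes the lengths telescope. Without an argument of this kind, claim (v) remains unproved in your write-up.
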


\begin{proof}\
  \begin{enumerate}
  \item See \cite{Akopyan-Bobenko}.
  \item This holds for the two dual subnets of any net consisting of straight lines.
  \item Each of the dual nets corresponds to the incenters of a checkerboard IC-net, that is, a net having incircles in every other quadrilateral
    (both checkerboard IC-nets fitting perfectly into each other forming a regular IC-net). Checkerboard IC-nets have been observed to be circular-conical in   \cite{Akopyan-Bobenko}.
  \item 
    Dual edges intersect at a point where two lines of the IC-net intersect.
    The dual edges of $\eta$ and of $\tilde\eta$ are the two angle bisectors of those two lines of the IC-net, and therefore are mutually orthogonal.
  \item From now on, we use the shift notation, like in Section \ref{sect: dKoenigs}, so that $\eta_{(\pm i)}(\bn) \coloneqq \eta(\bn \pm \be_i)$ etc., with the understanding that the argument of $\eta$, $\tilde\eta$ is $(k,l)$, while the argument of $f$ is $(i,j)$. Consider four quadrilaterals of the net $\eta$ adjacent to one vertex  (compare Figure \ref{fig:icnet-koenigs-proof}). 
    The points of intersection of diagonals of these four quadrilaterals are the points $\tilde{\eta}$, $\tilde{\eta}_{(1)}$, $\tilde{\eta}_{(12)}$, $\tilde{\eta}_{(2)}$ of the net $\tilde\eta$.
    We show that
    \begin{equation}
      \label{eq:Koenigs-Menelaus}
      \multiratio{\eta_{(1)}}{\tilde{\eta}_{(12)}}{\eta_{(2)}}{\tilde{\eta}_{(2)}}{\eta_{(-1)}}{\tilde{\eta}}{\eta_{(-2)}}{\tilde{\eta}_{(1)}} = 1.
    \end{equation}
    This is equivalent to the net $\eta$ being Koenigs (see \cite[p. 52]{bobenko-suris}).

    Considering one of the four quotients on the left-hand side,  we find:
    \begin{equation*}
      \frac{|\eta_{(1)} \tilde{\eta}_{(12)}|}{|\tilde{\eta}_{(12)} \eta_{(2)}|} 
      = \frac{\text{area}(\eta_{(1)}, \tilde{\eta}_{(12)}, \eta)}{\text{area}(\tilde{\eta}_{(12)}, \eta_{(2)}, \eta)} 
      = \frac{|f_{(1)} \tilde{\eta}_{(12)}| \cdot |\eta \eta_{(1)}|}{|f_{(12)} \tilde{\eta}_{(12)}| \cdot |\eta \eta_{(2)}|},
    \end{equation*}
    since the dual edges of $\eta$ and of $\tilde\eta$ are orthogonal.
    In the product the lengths of the edges $|\eta \eta_{(i)}|$ cancel out, and we obtain
    \begin{eqnarray*}
      \lefteqn{\multiratio{\eta_{(1)}}{\tilde{\eta}_{(12)}}{\eta_{(2)}}{\tilde{\eta}_{(2)}}{\eta_{(-1)}}{\tilde{\eta}}{\eta_{(-2)}}{\tilde{\eta}_{(1)}}}\\
      & = & \left( 
        \multiratio{\tilde{\eta}}{f}{\tilde{\eta}_{(1)}}{f_{(1)}}{\tilde{\eta}_{(12)}}{f_{(12)}}{\tilde{\eta}_{(2)}}{f_{(2)}}
        \right)^{-1}.
    \end{eqnarray*}
    The latter product is equal to 1 since the triangles $(\tilde{\eta}, f, f_{(2)})$ and $(\tilde{\eta}_{(12)}, f_{(1)}, f_{(12)})$
    are perspective triangles (Menelaus condition for Desargues configuration, cf. \cite[p. 361]{bobenko-suris}). We mention that the right-hand side of the latter formula being equal to 1 is the Koenigs condition for the net $\omega$, while the left-hand side being equal to 1 is the Koenigs condition for the net $\eta$. \qedhere
  \end{enumerate}
\end{proof}

Apparently, there also holds: 
\begin{itemize}
\item[(vi)] {\em the dual subnets $\eta$ and $\tilde\eta$ satisfy the discrete factorization property \eqref{E24ccc}},
\end{itemize}
but at present this only has been checked via numerical experiments.

\begin{figure}[H]
  \includegraphics[width=\textwidth]{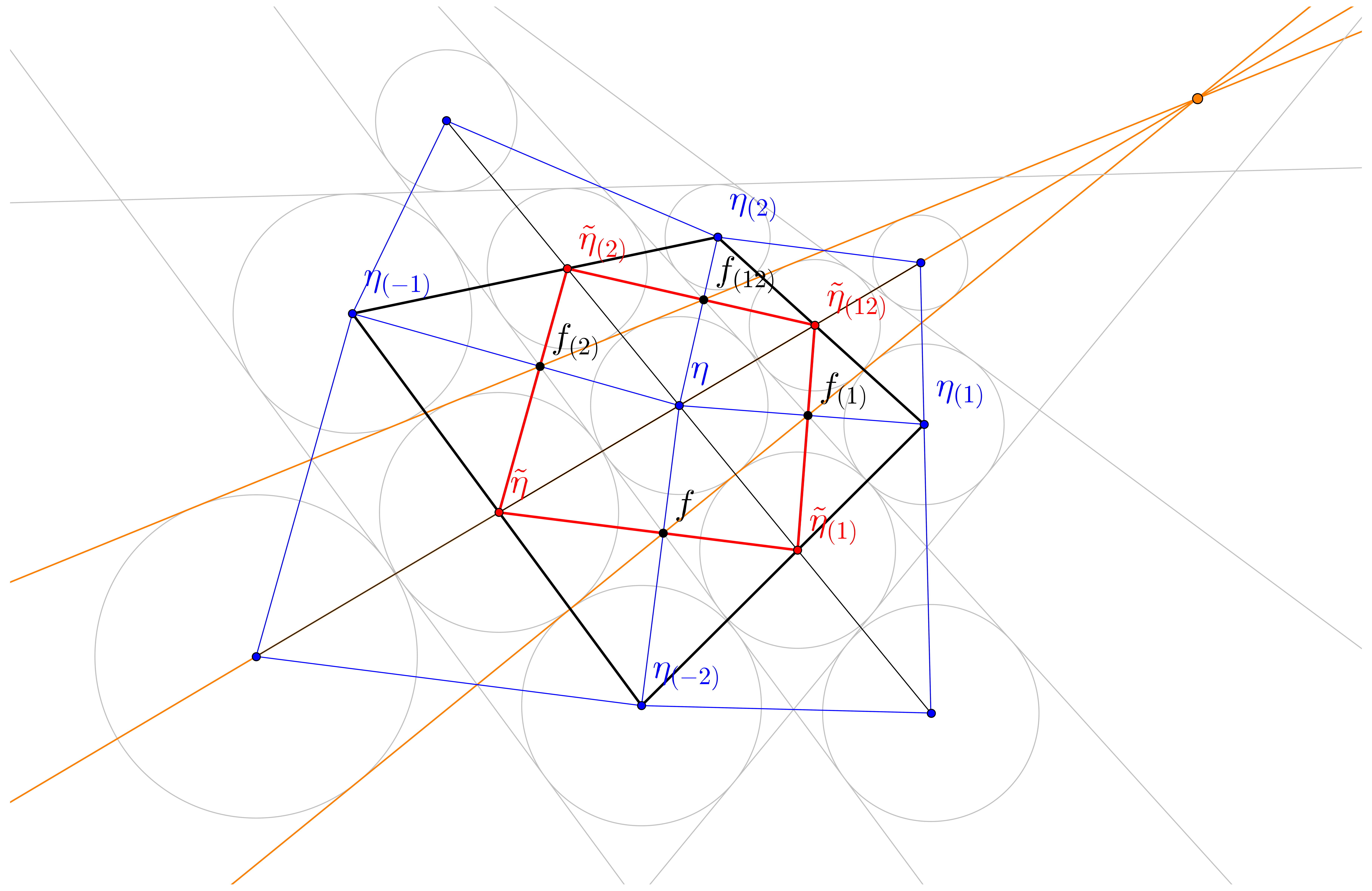}
  \caption{
    Koenigs property of the incenter net $\omega$ of an IC-net
    implies the Koenigs property for its two diagonal nets $\eta$ and $\tilde{\eta}$.
  }
  \label{fig:icnet-koenigs-proof}
\end{figure}

%
%
\enlargethispage{2\baselineskip}

\end{document}